\numberwithin{equation}{section}
\numberwithin{figure}{section}
\newtheorem{theorem}{Theorem}[section]
\newtheorem{lemma}[theorem]{Lemma}
\theoremstyle{definition}
\newtheorem{definition}[theorem]{Definition}
\newtheorem{example}[theorem]{Example}
\theoremstyle{remark}
\newtheorem{remark}[theorem]{Remark}
\numberwithin{equation}{section}
\theoremstyle{plain}
\newtheorem{conjecture}[theorem]{Conjecture}
\newtheorem{corollary}[theorem]{Corollary}
\newtheorem{question}[theorem]{Question}
\newtheorem{proposition}[theorem]{Proposition}
\newcommand{\R}{\mathbb{R}}
\newcommand{\ol}{\overline}
\renewcommand{\div}{\text{div}}
\newcommand{\divm}{\text{\emph{div}}}
\DeclareMathOperator{\tr}{tr}
\renewcommand{\R}{\mathbb{R}}
\newcommand{\II}{\operatorname{{I\hspace{-0.2mm}I}}}
\newcommand{\trSigmak}{\tr_\Sigma\!k}
\begin{document}

\begin{abstract}
We identify a condition on spacelike 2-surfaces in a spacetime that is relevant to understanding the concept of mass in general relativity.  We prove a formula for the variation of the spacetime Hawking mass under a uniformly area expanding flow and show that it is nonnegative for these so-called ``time flat surfaces.''  Such flows generalize inverse mean curvature flow, which was used by Huisken and Ilmanen to prove the Riemannian Penrose inequality for one black hole.
A flow of time flat surfaces may have connections to the problem in general relativity of bounding the mass of a spacetime from below by the quasi-local mass of a spacelike 2-surface contained therein.
\end{abstract}

\title[Time flat surfaces and the monotonicity of the  Hawking mass]{Time flat surfaces and  the monotonicity of the spacetime Hawking mass}
\author{Hubert L. Bray}
\address{Dept. of Mathematics,
Duke University,
Durham, NC 27708}
\email{bray@math.duke.edu}

\author{Jeffrey L. Jauregui}
\address{Dept. of Mathematics,
Union College,
Schenectady, NY 12308}
\email{jaureguj@union.edu}

\date{\today}
\maketitle

\section{Introduction} 
In general relativity a significant and influential problem is to understand  how the quasi-local mass of a spacelike surface contained in a (3+1)-dimensional spacetime $(N, \langle\cdot,\cdot\rangle)$ provides a lower bound for the total mass of $N$.  As a special case, the conjectured Penrose inequality predicts that the ADM mass is bounded from below in terms of the area of outermost apparent horizons of black holes, subject to natural hypotheses.

Very important progress on the above problem was made by Huisken and Ilmanen in 2001, in the case $N$ admits a totally geodesic (``time-symmetric''), asymptotically flat spacelike hypersurface $(M^3,g)$ \cite{imcf}.  Their motivation was prior work of  Geroch \cite{geroch},
Jang \cites{jang1, jang2, jang3}, and Jang--Wald \cites{jang_wald} in the 1970s, who discovered the connection between inverse mean curvature flow (IMCF) and the Hawking mass.  A smooth family of closed surfaces $\{\Sigma_\lambda\}$ in $(M,g)$ is said to satisfy IMCF if their velocity is outward-normal with speed equal to the reciprocal of their mean curvature, $H$.  The Hawking mass of a surface $\Sigma$ in $M$ is defined to be
$$m_H(\Sigma) = \sqrt{\frac{|\Sigma|}{16\pi}} \left( 1 - \frac{1}{16\pi} \int_\Sigma H^2 dA\right),$$
where $|\Sigma|$ is the area of $\Sigma$.
The key observation of Geroch, Jang, and Wald is that if $\Sigma_\lambda$ are connected and satisfy IMCF,
then $m_H(\Sigma_\lambda)$ is nondecreasing, provided $(M,g)$ has nonnegative scalar curvature (which itself follows from the dominant energy condition on $N$).  Moreover, they realized that such a flow of surfaces could be useful for bounding the total mass of $M$ in terms of the Hawking mass of $\Sigma$.  However, it has long been known that smooth solutions to IMCF with given initial condition may not exist.  The contribution of Huisken and Ilmanen was to develop a theory of weak solutions to IMCF that maintained all the key properties of the smooth case.  As a corollary, the so-called Riemannian Penrose inequality (for a single black hole) followed.
(Bray proved the multiple black hole case using different techniques \cite{bray_RPI}.)

However, a spacetime generally has no such totally geodesic hypersurface, so it is highly desirable to achieve similar results without that hypothesis.  In the literature, there appear to be two broad  approaches to doing so.  First, one could take an asymptotically flat hypersurface $(M,g)$,  treat its extrinsic curvature $k$ as auxiliary data, and attempt to define a flow purely within $M$ (see, for instance, \cites{bray_khuri, MMS, moore}). 
Second, one could construct a codimension-two flow of surfaces within $N$ (see, for instance, \cites{frauendiener, hayward, imcf_spacetime}).  In the former approach, no appropriate monotone quantity is known; in the latter, finding an existence theory has proven elusive. For instance, surfaces evolving with velocity \cite{frauendiener}
\begin{equation}
\label{eqn_imcvf}
\vec \xi =\frac{-\vec H}{\langle \vec H, \vec H\rangle},
\end{equation}
where $\vec H$ is the mean curvature vector of $\Sigma$ in $N$,
satisfy a forwards-backwards parabolic PDE system (and thus solutions for most  initial data do not exist).  However, any solution does have nondecreasing  (spacetime) Hawking mass \cite{hawking}
\begin{equation}
\label{eqn_hawking}
m_H(\Sigma) = \sqrt{\frac{|\Sigma|}{16\pi}} \left(1 - \frac{1}{16\pi} \int_{\Sigma} \langle \vec H, \vec H \rangle dA\right)
\end{equation}
when $N$ satisfies the dominant energy condition and the surfaces are connected.

In this paper, we take the second approach, studying the variation of the Hawking mass under \emph{uniformly area expanding flows} in $N$, and define the notion of a \emph{time flat} surface.
A flow is uniformly area expanding if its velocity $\vec \xi$ can be written in the form
\begin{equation}
\label{eqn_xi}
\vec \xi = \frac{-\vec H}{\langle \vec H, \vec H\rangle} + \beta\frac{-\vec H^\perp}{\langle \vec H, \vec H\rangle},
\end{equation}
for some function $\beta$ on $\Sigma$, where $\vec H ^\perp$ is orthogonal to $\Sigma$ and $\vec H$ and has the same length as $\vec H$ (cf. Definition \ref{def_uae}).  As such, uniformly area expanding flows generalize inverse mean curvature vector flow (\ref{eqn_imcvf}) and were studied in \cites{MMS, imcf_spacetime}.
Given the success of inverse mean curvature flow in the time-symmetric case, it is natural to study the variation of the Hawking mass under a uniformly area expanding flow.

Our main result is the following new formula for the variation of the Hawking mass, which builds on previous work of Malec, Mars, and Simon \cite{MMS} (see also \cite{imcf_spacetime}).
\begin{theorem}
\label{thm_tf_monotonicity}
Let $\Sigma$ be a closed spacelike surface embedded in a spacetime $N$, having spacelike mean curvature vector $\vec H$. Let $\Sigma_\lambda$ be a smooth family of surfaces with $\Sigma_0 = \Sigma$ that is uniformly area expanding at $\lambda=0$, with outward-spacelike velocity $\vec \xi$.  Then:
\begin{align*}
\left.\frac{d m_H(\Sigma_\lambda)}{d\lambda}\right|_{\lambda=0} &= \sqrt{\frac{|\Sigma|}{(16\pi)^3}}  \Bigg\{4\pi(2-\chi(\Sigma))+2\int_\Sigma  G(-\vec H^\perp, \vec \xi^\perp)dA\\
&\;+\int_\Sigma \left(|\mathring \II_{\nu_H} |^2 + 2\beta \langle  \mathring \II_{\nu_H}, \mathring \II_{\nu_H^\perp}\rangle + |\mathring \II_{\nu_H^\perp}|^2 \right)dA\\
&\;+2\int_\Sigma \left(\left| \frac{\nabla^\Sigma |\vec H|}{|\vec H|}\right|^2 + 2\beta\alpha_H\left(\frac{\nabla^\Sigma |\vec H|}{|\vec H|}\right) +|\alpha_H|^2 + \beta\divm_\Sigma(\alpha_H)\right)dA\Bigg\}
 \end{align*}
\end{theorem}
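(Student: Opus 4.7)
The plan is to differentiate the Hawking mass formula (\ref{eqn_hawking}) directly, apply the classical first-variation formulas for the area measure and the mean curvature vector in the codimension-two Lorentzian setting, and then reorganize using the Gauss equation and Gauss-Bonnet. Writing $m_H(\Sigma_\lambda) = (16\pi)^{-1/2}|\Sigma_\lambda|^{1/2}\bigl(1 - \tfrac{1}{16\pi}W(\lambda)\bigr)$ with $W(\lambda) := \int_{\Sigma_\lambda}\langle\vec H,\vec H\rangle\,dA$, the product rule reduces the theorem to computing $\frac{d}{d\lambda}|\Sigma_\lambda|\bigl|_0$ and $W'(0)$. The first variation of area along a normal flow is $\frac{d}{d\lambda}|\Sigma_\lambda| = -\int_\Sigma\langle\vec H,\vec\xi\rangle\,dA$; substituting (\ref{eqn_xi}) and using $\langle\vec H,\vec H^\perp\rangle = 0$ gives $\langle\vec H,\vec\xi\rangle \equiv -1$, so $\frac{d}{d\lambda}|\Sigma_\lambda|\bigl|_0 = |\Sigma|$. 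The remaining task is to compute $W'(0)$.

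Decompose the velocity as $\vec\xi = f\nu_H + g\nu_H^\perp$ with $f = -1/|\vec H|$ and $g = -\beta/|\vec H|$, and apply the codimension-two normal-deformation formula for $\vec H$. This produces three classes of terms in $\partial_\lambda\langle\vec H,\vec H\rangle\,dA$: ambient Riemann-curvature contractions involving $\vec H^\perp$ and $\vec\xi^\perp$; quadratic contractions of $\II_{\nu_H}$ and $\II_{\nu_H^\perp}$; and normal-Laplacian terms in $f$ and $g$ that, after integration by parts, yield tangential derivatives of the adapted frame. The Gauss equation converts a specific combination of ambient sectional curvatures with squared second fundamental forms into the intrinsic Gauss curvature $K_\Sigma$; Gauss-Bonnet then delivers the $4\pi(2-\chi(\Sigma))$ term. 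The surviving ambient curvature combination repackages, via $G = \Ric - \tfrac12 R\,\langle\cdot,\cdot\rangle$, into $2\int_\Sigma G(-\vec H^\perp,\vec\xi^\perp)\,dA$.

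Splitting each second fundamental form via $|\II_\nu|^2 = |\mathring\II_\nu|^2 + \tfrac12 H_\nu^2$ on a 2-surface causes the trace pieces $H_{\nu_H}^2 = |\vec H|^2$ (with $H_{\nu_H^\perp} = 0$) to cancel against the $|\vec H|^2$ contributions from $W(0)$ and the area derivative, leaving exactly $|\mathring\II_{\nu_H}|^2 + 2\beta\langle\mathring\II_{\nu_H},\mathring\II_{\nu_H^\perp}\rangle + |\mathring\II_{\nu_H^\perp}|^2$. For the final group, note that the preferred frame $\{\nu_H,\nu_H^\perp\}$ depends on $\vec H$, so its tangential derivative splits into a length change $\nabla^\Sigma|\vec H|/|\vec H|$ and a normal-bundle rotation governed by the connection 1-form $\alpha_H(X) := \langle\nabla_X^N\nu_H,\nu_H^\perp\rangle$. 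Integrating the normal-Laplacian contributions by parts against $f = -1/|\vec H|$ and $g = -\beta/|\vec H|$ then produces exactly $\left|\tfrac{\nabla^\Sigma|\vec H|}{|\vec H|}\right|^2 + 2\beta\,\alpha_H\!\left(\tfrac{\nabla^\Sigma|\vec H|}{|\vec H|}\right) + |\alpha_H|^2 + \beta\,\divm_\Sigma(\alpha_H)$, with the last term arising from the integration by parts that isolates the scalar divergence.

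The principal obstacle is the signed bookkeeping intrinsic to Lorentzian codim-two geometry: the timelike character $\langle\nu_H^\perp,\nu_H^\perp\rangle = -1$ flips the sign of every self-contraction of $\II_{\nu_H^\perp}$, and each derivative of the $\vec H$-adapted frame splits into competing $\nabla^\Sigma|\vec H|$ and $\alpha_H$ contributions that must be carefully separated. The non-obvious repackaging of the raw ambient Ricci terms into $G(-\vec H^\perp,\vec\xi^\perp)$, and the integration by parts that isolates $\beta\,\divm_\Sigma(\alpha_H)$ as a stand-alone term rather than burying it inside a Laplacian of $\beta/|\vec H|$, are the delicate steps. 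The appearance of this final divergence term is precisely the feature that motivates the paper's notion of a time-flat surface, under which the term vanishes and manifest monotonicity becomes possible.
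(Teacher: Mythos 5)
Your proposal takes a genuinely different route from the paper's. The paper proves Theorem \ref{thm_tf_monotonicity} in two stages: first it establishes the hypersurface-based formula of Theorem \ref{thm_monotonicity} (reproducing Malec--Mars--Simon) by treating the flow as inverse mean curvature flow inside the swept-out Riemannian hypersurface $(M,g,k)$, using the constraint equations (\ref{constraint1})--(\ref{constraint2}) together with the codimension-one variation formulas of Lemma \ref{lemma_variation}; second, it passes from the $\nu$-adapted quantities $A$, $p_\Sigma$, $\ol p$, $H$ of (\ref{eqn_mms}) to the $\nu_H$-adapted quantities $\mathring\II_{\nu_H}$, $\mathring\II_{\nu_H^\perp}$, $\alpha_H$, $|\vec H|$ via the hyperbolic change of frame $\nu = \cosh\theta\,\nu_H + \sinh\theta\,\nu_H^\perp$ with $\theta = \tanh^{-1}\beta$. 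You propose instead to work entirely in the spacetime, with no auxiliary hypersurface: differentiate $\langle\vec H,\vec H\rangle$ via a codimension-two Simons-type identity, invoke the codimension-two Gauss equation to produce $K_\Sigma$ and hence Gauss--Bonnet, and assemble the Einstein-tensor term from ambient Ricci contractions. That is essentially the route of the Bray--Hayward--Mars--Simon reference, which the paper cites as containing ``similar calculations,'' and it has the merit of being frame-independent from the start. The cost is that the Lorentzian signature of $T^\perp\Sigma$ makes every self-contraction sign-sensitive, whereas the paper's detour through $M$ confines the heavy computation to the familiar Riemannian codimension-one setting.

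Two cautions before this outline becomes a proof. First, you take $\vec\xi = f\nu_H + g\nu_H^\perp$ with $f = -1/|\vec H|$ and $g = -\beta/|\vec H|$, but with the paper's convention $\nu_H := -\vec H/|\vec H|$, equation (\ref{eqn_xi}) gives $f = +1/|\vec H|$ and $g = +\beta/|\vec H|$. Flipping $\nu_H$ flips $\alpha_H$ and every cross term involving $\mathring\II_{\nu_H}$, and the authors' own remark records that exactly this kind of slip produced an incorrect formula in the first arXiv version, so the convention must be pinned down before any of the cross terms can be trusted. Second, the step from ``normal-Laplacian terms in $f$ and $g$'' to the precise combination $\left|\frac{\nabla^\Sigma|\vec H|}{|\vec H|}\right|^2 + 2\beta\,\alpha_H\!\left(\frac{\nabla^\Sigma|\vec H|}{|\vec H|}\right) + |\alpha_H|^2 + \beta\,\div_\Sigma(\alpha_H)$ is the delicate heart of the computation: in the paper it emerges only after a compensating $|\nabla^\Sigma\beta|^2/(1-\beta^2)$ term produced by one integration by parts cancels against an identical term from another, and the coefficient of $\beta\,\div_\Sigma(\alpha_H)$ drops from an intermediate $4$ to the final $2$. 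Your proposal gestures at this via ``integrating the normal-Laplacian contributions by parts'' without exhibiting that cancellation, so as written it does not yet establish the stated coefficient of $\div_\Sigma(\alpha_H)$, which is precisely the term that makes the time-flat condition the right one.
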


All notation is explained in sections \ref{sec_monotonicity} and \ref{sec_time_flat}. Note that $\beta$ determines the flow velocity according to (\ref{eqn_xi}) and that
$|\beta| < 1$ since $\vec \xi$ and $\vec H$ are spacelike.

\begin{remark}
In the first  version of this paper, posted to the arXiv, the above formula was stated incorrectly (and in a different form) due to a sign error which we are indebted to Marc Mars for pointing out to us.  
\end{remark}

To motivate the definition of time flat, we will show that if $\div(\alpha_H)$ vanishes, then above formula for the variation of the Hawking mass is nonnegative, if $|\beta| < 1$ and the dominant energy condition is satisfied.

\begin{definition}
\label{def_TF}
A two-dimensional, embedded spacelike surface $\Sigma$ with spacelike mean curvature vector $\vec H$ in a (3+1)-dimensional spacetime is \emph{time flat} if 
$\div_\Sigma (\alpha_H) = 0$, where $\alpha_H$ is the connection 1-form of $T^\perp\Sigma$ associated to the unit normal $\nu_H = -\frac{\vec H}{|\vec H|}$ (cf. equation (\ref{eqn_conn_form})), and $\div_\Sigma$ is the divergence on $\Sigma$.
\end{definition}
In section \ref{sec_time_flat} we explain why the time flat condition is geometrically natural and how it relates to the Wang--Yau quasi-local mass \cites{wang_yau1, wang_yau2}.  The definition extends  to spacelike $(n-1)$-dimensional submanifolds $\Sigma$ of an $(n+1)$-dimensional Lorentzian spacetime, for $n \geq 2$.  For $n=2$, time flatness is equivalent to the curve $\Sigma$ having constant torsion.

While the variation of the Hawking mass can have any sign in general, it is always nonnegative on time-flat surfaces:

\begin{corollary}
\label{cor_tf}
Let $\Sigma$ be a closed, connected time flat surface embedded in a spacetime $N$ that  obeys the dominant energy condition.  Then for any outward-spacelike, uniformly area expanding flow
$\{\Sigma_\lambda\}$ with $\Sigma_0 = \Sigma$,
$$\left.\frac{d}{d\lambda} m_H(\Sigma_\lambda)\right|_{\lambda=0} \geq 0.$$
\end{corollary}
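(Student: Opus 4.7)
The plan is to apply Theorem \ref{thm_tf_monotonicity} at $\lambda = 0$ and verify that each of the four contributions on the right-hand side is nonnegative under the corollary's hypotheses: the Gauss--Bonnet term $4\pi(2-\chi(\Sigma))$, the Einstein tensor term, the traceless second fundamental form term, and the connection one-form term (where the time flat assumption enters).

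The first term is nonnegative because $\Sigma$ is closed and connected, so $\chi(\Sigma) \leq 2$. For the Einstein tensor term, the key step is to check that both $-\vec H^\perp$ and $\vec \xi^\perp$ are future-directed timelike. The vector $\vec H^\perp$ is timelike (being orthogonal to the spacelike $\vec H$ in the Lorentzian normal plane), and a direct expansion yields
\[
\langle \vec \xi^\perp, \vec \xi^\perp\rangle = \frac{\beta^2 - 1}{|\vec H|^2} < 0,
\]
using $|\beta| < 1$ (which follows from $\vec \xi$ and $\vec H$ both being spacelike). The timelike part of $\vec \xi^\perp$ is a positive multiple of $-\vec H^\perp$, so under the paper's time orientation (for which $-\vec H^\perp$ is future-directed), both vectors are future-timelike. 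The dominant energy condition, applied via $G = 8\pi T$, then gives $G(-\vec H^\perp, \vec \xi^\perp) \geq 0$ pointwise on $\Sigma$.

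The remaining two integrands are quadratic in $\beta$ and are treated by completing the square. With $A = \mathring{\II}_{\nu_H}$ and $B = \mathring{\II}_{\nu_H^\perp}$ and the Riemannian metric on symmetric $2$-tensors on $\Sigma$, the third integrand equals $|A + \beta B|^2 + (1-\beta^2)|B|^2$, nonnegative since $|\beta| < 1$. For the fourth, the time flat hypothesis eliminates the $\beta\, \divm_\Sigma(\alpha_H)$ piece, after which the same identity with $\nabla^\Sigma |\vec H|/|\vec H|$ and the metric dual $\alpha_H^\sharp$ in the roles of $A$ and $B$ yields nonnegativity. Summing the four nonnegative pieces proves the corollary; the main point requiring care is the orientation and causality verification for the Einstein tensor step, with everything else reducing to Gauss--Bonnet and two completions of the square that crucially use $|\beta| < 1$.
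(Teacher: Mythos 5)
Your proof is correct and takes essentially the same approach as the paper: apply Theorem \ref{thm_tf_monotonicity} and check that each of the four contributions is pointwise nonnegative — Gauss--Bonnet via $\chi(\Sigma)\leq 2$, the Einstein tensor term via the dominant energy condition after verifying $-\vec H^\perp$ and $\vec\xi^\perp$ are future-timelike, the two quadratic groups via $|\beta|<1$, and the final term vanishing by time flatness. Your completing-the-square identity $|A|^2+2\beta\langle A,B\rangle+|B|^2=|A+\beta B|^2+(1-\beta^2)|B|^2$ and the explicit computation $\langle\vec\xi^\perp,\vec\xi^\perp\rangle=(\beta^2-1)/|\vec H|^2$ usefully fill in details the paper leaves implicit.
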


For establishing intuition, we present the following picture.Ê The Hawking mass of a round sphere in the $t=0$ slice of the Minkowski spacetime (representing vacuum) is zero.Ê Introducing spatial oscillations to this sphere makes the Hawking mass negative (that is, too small), whereas introducing timelike oscillations to this sphere can make the Hawking mass positive (that is, too big; see Example \ref{ex_sphere}).Ê Now suppose we want to find a flow of surfaces that makes the Hawking mass nondecreasing and convergent to the total (ADM) mass at infinity.Ê Since the ADM mass of Minkowski space is zero, this is not possible if we start with a surface with positive Hawking mass.

Hence, the only way to find a flow of surfaces where the Hawking mass is nondecreasing and convergent to the ADM mass at infinity is to not allow the flow to begin with surfaces with Hawking mass that is too large in the first place.Ê Direct computation suggests that ``timelike oscillations'' in a surface can make the Hawking mass of the surface too large.Ê We suggest that time flat surfaces, defined above, should be thought of as surfaces without timelike oscillations.Ê 
In section \ref{sec_discussion} we describe a uniformly area expanding flow that preserves the time flat condition with the hope of keeping timelike oscillations in check, and possibly allowing the Hawking masses to converge to the ADM mass.

\vspace{2mm}
\paragraph{\emph{Outline}}
Section \ref{sec_setup} introduces conventions and notation.  In section \ref{sec_connection}, we study a natural variational problem on the normal bundle of $\Sigma$ that defines a notation of ``straight out'' unit spacelike direction from $\Sigma$.  This will provide  motivation for the  divergence-free condition on the connection 1-form appearing in the definition of time flat. 
 In section \ref{sec_monotonicity} we prove in full detail a variation formula for the Hawking mass that first appeared in \cite{MMS}. 
Section \ref{sec_time_flat} is the crux of the paper, in which the time flat condition is explored and Theorem \ref{thm_tf_monotonicity} is proved.  We conclude with section \ref{sec_discussion}, a discussion on a flow of time flat surfaces and its possible applications to problems pertaining to bounding total mass in terms of  quasi-local mass.

\vspace{2mm}
\paragraph{\emph{Acknowledgements}} The authors are extremely grateful to Marc Mars for identifying a sign error in the first version of this preprint which led us to an improved version of Theorem 1.1. The first named author was supported in part by NSF grant \#DMS-1007063.  This material is based upon work supported by the NSF under grant \#DMS-0932078 000, while the second named author was in residence at the Mathematical Sciences Research Institute in Berkeley, California during the fall of 2013.

\section{Setup}
\label{sec_setup}
For the remainder of the paper, we shall assume  $(N,\langle \;,\;\rangle)$ is a connected, smooth, time-oriented, four-dimensional spacetime of signature $(-,+,+,+)$ that obeys Einstein's equation
\begin{equation}
\label{eqn_einstein}
G = 8\pi T,
\end{equation}
where $G$ and $T$ are, respectively, the Einstein curvature tensor and stress-energy tensor of $N$. All manifolds, functions, tensors, are assumed to be smooth unless stated otherwise.

\vspace{2mm}
\paragraph{\emph{Geometry of spacelike hypersurfaces}}
Suppose $M$ is a spacelike hypersurface in $N$, with future-pointing unit normal $\vec n$, induced Riemannian metric $g$, and second fundamental form $k$ in the direction of $\vec n$ (see equation (\ref{eqn_k}) for the sign convention).  The energy and momentum densities relative to $M$ are defined respectively by
\begin{align*}
\mu &= T(\vec n, \vec n),\\
J(X) &= T(X, \vec n),
\end{align*}
for any tangent vector $X$ to $M$.  The Gauss and Codazzi equations, together with Einstein's equation (\ref{eqn_einstein}) imply the \emph{constraint equations} on $M$:
\begin{align}
16\pi \mu &= R + (\tr_M\!k)^2 - |k|^2 \label{constraint1}\\
8\pi J &=  \div_M(k-(\tr_M\! k)g) \label{constraint2},
\end{align}
where $R$ is the scalar curvature of $g$, $\tr_M\! k$ is the trace of $k$ with respect to $g$, $|k|^2$ is the norm-squared of $k$ with respect to $g$, and $\div_M$ is the divergence operator with respect to $g$, acting on symmetric $(0,2)$-tensors.

Recall that $N$ satisfies the \emph{dominant energy condition} if $T(u,v) \geq 0$ for all future-pointing timelike vectors $u,v$ in $N$ based at the same point.  This implies that
\begin{equation}
\label{DEC}
\mu \geq J(X)
\end{equation}
for any tangent vector $X$ to $M$ of length at most 1.

\vspace{2mm}
\paragraph{\emph{The normal bundle of a surface}}
Let $\Sigma$ be a closed 2-manifold, 
and let $\Phi: \Sigma \to N$ be an embedding such that $\Sigma_0 := \Phi(\Sigma)$ is spacelike.  We shall always assume the notion of inward- and outward-pointing spacelike vectors to $\Sigma$ is well-defined; this is the case, for instance, if $\Sigma_0$ is the boundary of a compact three-dimensional region
contained in a noncompact, complete spacelike hypersurface in $N$.  We shall identify $\Sigma_0$ with $\Sigma$.

Let $E$ be the rank-four vector bundle over $\Sigma$ given by restricting $TN$.  
We have an orthogonal decomposition
$$E = T\Sigma \oplus T^\perp \Sigma,$$
where $T^\perp \Sigma$ is the normal bundle of $\Sigma$, with induced metric of signature $(-,+)$.   Note that the set of non-null vectors in each fiber of $T^\perp \Sigma$ is partitioned into four quadrants: the future-timelike, past-timelike, outward-spacelike, and inward-spacelike vectors.

There exists a natural, involutive, linear isomorphism $u \mapsto u^\perp$
defined on the fibers of $T^\perp \Sigma$ as follows (and analogous to a $90^\circ$ rotation in a Euclidean plane).  For $u, v\in T_p^\perp \Sigma$ comprising
an orthonormal basis, with $u$ future-timelike and $v$ outward-spacelike, define $u^\perp=v$ and $v^\perp=u$ and extend linearly. This definition is basis-independent. Moreover, for any $w \in T_p^\perp \Sigma$,
$w^\perp $ is orthogonal to $w$ and their norms-squared have opposite signs:
$$-\langle w^\perp, w^\perp \rangle = \langle w, w \rangle.$$

\vspace{2mm}
\paragraph{\emph{Sign conventions}} Given a semi-Riemannian submanifold $P$ of a semi-Riemannian manifold $(L,\langle \cdot, \cdot \rangle)$ (with either possibly Riemannian), the second fundamental form
$\vec\II$ of $P$ is:
$$\vec\II(X,Y) = \nabla^L_X Y - \nabla^P_X Y,$$
where $\nabla^L$ and $\nabla^P$ are the respective Levi-Civita connections on $L$ and $P$. The mean curvature vector $\vec H$ is the trace of $\vec \II$ with respect to the induced metric on $P$.  Given a normal vector field $\vec n$ to $P$, 
we define the second fundamental form of $P$ in the direction $\vec n$ as
\begin{equation}
\label{eqn_k}
k = -\langle \vec \II, \vec n\rangle
\end{equation}
and the mean curvature in the direction $\vec n$ as
\begin{equation}
\label{eqn_H}
H = -\langle \vec H, \vec n \rangle.
\end{equation}
If $P$ has codimension one and we take $\epsilon=\langle\vec n, \vec n\rangle=\pm 1$,
these definitions are equivalent to:
$$\vec \II = -\epsilon k \vec n \qquad \text{and} \qquad \vec H = -\epsilon H \vec n$$

Thus, for instance, a round sphere in $\R^3$ has inward-pointing mean curvature vector, and positive mean curvature in the outward direction.

\section{The normal connection}
\label{sec_connection}
In this section, we consider a variational problem on the normal bundle of $\Sigma$ whose purpose is to select a natural choice of outward-spacelike unit vector field $\nu$.  The point is to find a $\nu$ that varies as little as possible over $\Sigma$; such $\nu$ could be called a ``straight out'' direction from $\Sigma$.  Indeed, $\nu$ will be parallel whenever a parallel section exists, and in all cases is unique up to hyperbolic rotations by constant angle.

Let $\nabla^\perp$ be the induced connection on $T^\perp \Sigma$,  given by projecting $\nabla^N$ orthogonally onto $T^\perp \Sigma$, fiberwise, where $\nabla^N$ is the Levi-Civita connection of $N$.

\vspace{2mm}
\paragraph{\emph{The connection 1-form}}
Fix any section $\nu$ of $T^\perp \Sigma$ that is outward-spacelike and of unit length.  Then for any tangent vector field $X$  to $\Sigma$, $0=D_X \langle \nu, \nu \rangle =2 \langle \nabla^\perp_X \nu, \nu\rangle$, so $\nabla^\perp_X \nu$ is a multiple of  $\nu^\perp$.
Thus, there exists a unique 1-form $\alpha_\nu$ on $\Sigma$, depending on $\nu$,
so that
\begin{equation}
\label{eqn_conn_form}
\langle\nabla_{X}^\perp \nu, \nu^\perp\rangle = \alpha_\nu(X).
\end{equation}
and similarly,
$$\langle\nabla_{X}^\perp \nu^\perp, \nu\rangle = -\alpha_\nu(X).$$
Observe that $\nu$ and $\alpha_\nu$  completely determine $\nabla^\perp$ and that $\alpha_\nu$ vanishes if and only if $\nu$ is parallel.
We shall write $\alpha$ in place of $\alpha_\nu$ when there is no chance of confusion.  

\vspace{2mm}
\paragraph{\emph{Change-of-basis}}
If $\ol \nu$ is some other choice of outward-spacelike unit normal to $\Sigma$, say with connection 1-form $\ol \alpha$, we can write:
\begin{align}
\ol \nu &= \cosh(\theta) \nu + \sinh(\theta) \nu^\perp \label{eqn_nu_nubar},\\
\ol \nu^\perp &= \sinh(\theta) \nu + \cosh(\theta) \nu^\perp\nonumber,
\end{align}
for some function $\theta: \Sigma \to \R$.  Thus:
\begin{align*}
\ol \alpha(X) &= \langle\nabla_{X}^\perp \ol\nu, \ol\nu^\perp\rangle\\
 &=\langle \nabla_X^\perp\left(\cosh(\theta) \nu + \sinh(\theta) \nu^\perp\right), \sinh(\theta) \nu + \cosh(\theta) \nu^\perp\rangle\\
 &=-d\theta(X) + \alpha(X) .
 \end{align*}
In particular,
\begin{equation}
\label{eqn_dtheta}
\ol \alpha = \alpha - d\theta.
\end{equation}

\vspace{2mm}
\paragraph{\emph{Minimizing the $L^2$-norm of the covariant derivative}}
We consider the following functional on unit-length, outward-spacelike sections $\nu$ of $T^\perp \Sigma$:
\begin{equation}
\label{eqn_def_C}
\mathcal C(\nu) = \int_\Sigma \|\nabla^\perp \nu \|^2 dA,
\end{equation}
where
$$ \|\nabla^\perp \nu \|^2 = \sum_{i=1}^2\langle \nabla_{e_i} ^\perp \nu, \nu^\perp\rangle^2 = \sum_{i=1}^2 \alpha_\nu(e_i)^2 = |\alpha_\nu|^2$$
in any local orthonormal frame $\{e_1, e_2\}$ on $T\Sigma$.  We seek minimizers of $\mathcal{C}$ (``straight out'' directions), as they generalize the notion of parallel section.

\begin{proposition}
$\nu$ is a minimizer of $\mathcal{C}$ if and only if $\divm_\Sigma (\alpha_\nu) = 0$, where $\alpha_\nu$ is the connection 1-form associated to $\nu$.
\end{proposition}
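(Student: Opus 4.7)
The plan is to reduce the problem to a standard calculus-of-variations argument on functions $\theta: \Sigma \to \R$, using the change-of-basis identity (\ref{eqn_dtheta}) to parametrize all competitors.

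First, fix any outward-spacelike unit section $\nu$ with connection 1-form $\alpha = \alpha_\nu$. By equation (\ref{eqn_nu_nubar}), every other outward-spacelike unit section $\bar\nu$ of $T^\perp\Sigma$ is obtained from $\nu$ by a hyperbolic rotation through some smooth angle $\theta: \Sigma \to \R$, and its connection 1-form satisfies $\alpha_{\bar\nu} = \alpha - d\theta$ by (\ref{eqn_dtheta}). Consequently, minimizing $\mathcal{C}$ over the space of outward-spacelike unit sections of $T^\perp\Sigma$ is equivalent to minimizing the functional
\begin{equation*}
F(\theta) = \int_\Sigma |\alpha - d\theta|^2 \, dA
\end{equation*}
over all smooth $\theta: \Sigma \to \R$ (with $\theta \equiv 0$ corresponding to the fixed reference section $\nu$).

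Next I would compute the first variation of $F$. For any test function $\phi \in C^\infty(\Sigma)$, expanding the integrand and differentiating at $s=0$ gives
\begin{equation*}
\frac{d}{ds}\bigg|_{s=0} F(\theta + s\phi) = -2 \int_\Sigma \langle \alpha - d\theta, d\phi\rangle \, dA.
\end{equation*}
Since $\Sigma$ is closed, integration by parts yields
\begin{equation*}
\int_\Sigma \langle \alpha_{\bar\nu}, d\phi\rangle \, dA = -\int_\Sigma \phi \, \divm_\Sigma(\alpha_{\bar\nu}) \, dA,
\end{equation*}
so the first variation vanishes for every $\phi$ if and only if $\divm_\Sigma(\alpha_{\bar\nu}) = 0$. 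This gives the critical-point characterization and, in the particular case $\theta \equiv 0$, shows that $\nu$ is critical for $\mathcal{C}$ if and only if $\divm_\Sigma(\alpha_\nu) = 0$.

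Finally, to upgrade ``critical point'' to ``minimizer,'' I would use convexity: $F$ is a nonnegative quadratic functional of $d\theta$, hence convex in $\theta$. Therefore every critical point of $F$ is a global minimizer, and conversely every minimizer is critical. Combined with the parametrization in the first paragraph, this proves the proposition in both directions. There is no real obstacle here; the only minor subtlety is to observe that the parametrization by $\theta$ covers \emph{all} competing outward-spacelike unit sections (which follows from the fact that the outward-spacelike component of $T^\perp\Sigma$ is connected), so that minimality over $\theta$ is the same as minimality over $\nu$.
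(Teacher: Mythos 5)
Your proposal is correct and follows essentially the same route as the paper: parametrize the competitors via the hyperbolic rotation angle $\theta$ using equation (\ref{eqn_dtheta}), compute the first variation by integrating by parts on the closed surface $\Sigma$, and invoke convexity of the quadratic functional to conclude that critical points are minimizers. The only cosmetic difference is that you first reduce the whole functional to $F(\theta)=\int_\Sigma|\alpha-d\theta|^2\,dA$ and explicitly note that the $\theta$-parametrization exhausts all outward-spacelike unit sections, whereas the paper works directly with a one-parameter family $\nu_\epsilon = \cosh(\epsilon\theta)\nu + \sinh(\epsilon\theta)\nu^\perp$; the computations are the same.
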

\begin{proof}
Fix an outward-spacelike, unit length section $\nu_0$ of $T^\perp \Sigma$ and an arbitrary real valued function $\theta: \Sigma \to \R$.  We consider a variation
$$\nu_\epsilon = \cosh(\epsilon\theta) \nu + \sinh(\epsilon\theta) \nu^\perp.$$
By formula (\ref{eqn_dtheta}), the connection one-form $\alpha_\epsilon$ of $\nu_\epsilon$ is:
$$\alpha_\epsilon = \alpha_0 - \epsilon d\theta.$$
Then:
\begin{align*}
\left.\frac{d}{d\epsilon}\right|_{\epsilon=0} \mathcal{C}(\nu_\epsilon) &= \left.\frac{d}{d\epsilon}\right|_{\epsilon=0}\int_\Sigma |\alpha_\epsilon|^2 dA\\
&= -2\int_\Sigma \langle \alpha_0, d\theta\rangle dA\\
&= -2\int_\Sigma \langle d^*\alpha_0, \theta\rangle dA,
\end{align*}
where $d^*$ is the $L^2$-adjoint of $d$.
Thus $\nu_0$ is a critical point iff $-d^* \alpha_0 = \div_\Sigma (\alpha_0)  = 0$. Due to the convex nature of the functional $\mathcal{C}$, all critical points are minimizers.
\end{proof}

\begin{remark}
\label{rmk_conn_lap}
Alternatively, one may view $\nabla^\perp : \Gamma(T^\perp \Sigma) \to \Gamma(T^* \Sigma \otimes T^\perp \Sigma),$ and let $(\nabla^\perp)^* :  \Gamma(T^* \Sigma \otimes T^\perp \Sigma) \to \Gamma(T^\perp \Sigma)$ be its $L^2$-adjoint.  Direct computation shows
$$(\nabla^\perp)^* \nabla^ \perp \nu = -|\alpha_\nu|^2 \nu + \div_\Sigma (\alpha_\nu)\nu^\perp.$$
In particular, $\nu$ is a minimizer of $\mathcal{C}$ if and only if $\nu$ is an eigensection of the connection Laplacian $(\nabla^\perp)^* \nabla^ \perp$.
\end{remark}

\begin{proposition}
\label{prop_critical}
Minimizers of $\mathcal{C}$ exist, and any two differ by a hyperbolic rotation (\ref{eqn_nu_nubar}) by constant angle $\theta$.  
\end{proposition}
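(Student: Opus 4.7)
The plan is to reduce the variational problem on the normal bundle to a scalar PDE on $\Sigma$ via the change-of-basis formula, and then invoke Hodge theory on the closed surface $\Sigma$.

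Fix any outward-spacelike unit section $\nu_0$ with connection 1-form $\alpha_0$. By (\ref{eqn_nu_nubar}) and (\ref{eqn_dtheta}), every other outward-spacelike unit section is of the form $\bar\nu = \cosh(\theta)\nu_0 + \sinh(\theta)\nu_0^\perp$ for some $\theta \in C^\infty(\Sigma)$, and its connection 1-form is $\bar\alpha = \alpha_0 - d\theta$. Hence
\[
\mathcal{C}(\bar\nu) = \int_\Sigma |\alpha_0 - d\theta|^2\, dA,
\]
and the problem becomes: minimize $\|\alpha_0 - d\theta\|_{L^2}^2$ over $\theta \in C^\infty(\Sigma)$.

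For existence, I would invoke the Hodge decomposition on the closed 2-manifold $\Sigma$ to write $\alpha_0 = d\phi + \gamma$, where $\gamma$ is $L^2$-orthogonal to the space of exact 1-forms and satisfies $d^*\gamma = 0$ (equivalently, $\divm_\Sigma \gamma = 0$). Concretely, $\phi$ solves the Poisson equation $\Delta_\Sigma \phi = -\divm_\Sigma \alpha_0$; this is solvable because the right-hand side is $L^2$-orthogonal to constants (any $d^*$-image is). Setting $\theta = \phi$ produces $\bar\nu$ with $\bar\alpha = \gamma$, so by the preceding proposition $\bar\nu$ is a minimizer.

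For uniqueness up to constant hyperbolic rotation, suppose $\nu$ and $\bar\nu$ are both minimizers, with connection 1-forms $\alpha, \bar\alpha$ satisfying $\divm_\Sigma \alpha = \divm_\Sigma \bar\alpha = 0$. Writing $\bar\nu = \cosh(\theta)\nu + \sinh(\theta)\nu^\perp$ as before, (\ref{eqn_dtheta}) gives $\bar\alpha = \alpha - d\theta$, whence
\[
\Delta_\Sigma \theta = d^* d\theta = d^*(\alpha - \bar\alpha) = 0.
\]
Since $\Sigma$ is closed and connected, the only harmonic functions are constants, so $\theta$ is constant, i.e., $\bar\nu$ differs from $\nu$ by a constant-angle hyperbolic rotation.

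The only mildly subtle step is verifying the solvability condition for the Poisson equation in the existence portion, but this is automatic since $\divm_\Sigma \alpha_0$ integrates to zero on the closed manifold $\Sigma$; the rest is a direct application of standard Hodge theory together with the convexity of $\mathcal{C}$ already noted in the previous proposition.
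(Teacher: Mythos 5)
Your proof is correct and takes essentially the same approach as the paper: both reduce the problem via the change-of-basis formula (\ref{eqn_dtheta}) to solving a Poisson equation on the closed surface $\Sigma$ (you frame this as Hodge decomposition, but the content is identical), and both derive uniqueness from the fact that harmonic functions on a closed connected surface are constant. One small nit: your Poisson equation $\Delta_\Sigma\phi = -\divm_\Sigma\alpha_0$ carries the opposite sign to the paper's $\Delta_\Sigma\theta = \divm_\Sigma\alpha$, which suggests you are using the analyst's convention $\Delta_\Sigma = d^*d$ rather than the paper's geometer's convention $\Delta_\Sigma = -d^*d = \divm_\Sigma\nabla^\Sigma$; this is harmless to the argument but should be made consistent with the paper's notation.
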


\begin{proof}
Fix some outward-spacelike unit normal $\nu$.  Suppose $\ol \nu$ is related to $\nu$ by (\ref{eqn_nu_nubar}). Let $\alpha = \alpha_\nu$ and $\ol \alpha = \alpha_{\ol \nu}$.
 By (\ref{eqn_dtheta}), $\ol \alpha$ is divergence-free iff $d^*d \theta = d^*\alpha$,
or, equivalently, 
\begin{equation}
\label{poisson}
\Delta_\Sigma \theta = \div_\Sigma (\alpha).
\end{equation}
By standard elliptic theory, a smooth solution $\theta$ to the Poisson equation (\ref{poisson}) exists (since $\int_\Sigma \div_\Sigma( \alpha) dA=0$). Moreover,
any two solutions differ by an element of $\ker \Delta_\Sigma$, which consists of the constant functions on $\Sigma$.
\end{proof}
Note the divergence of the connection 1-form appears in the definition of time flat surface and in the monotonicity formula (Theorem \ref{thm_tf_monotonicity}) for the Hawking mass.

\section{Variation of the Hawking mass}
\label{sec_monotonicity}
In this section we give a complete proof of Theorem \ref{thm_monotonicity}, a formula for the derivative of the Hawking mass (\ref{eqn_hawking}) along  a uniformly area expanding flow.  The equation  was first discovered by Malec, Mars, and Simon in \cite{MMS} (with similar calculations in \cite{imcf_spacetime}); we include the proof for clarity of exposition and to establish notation.  The reader may wish to postpone the proof of Theorem \ref{thm_monotonicity} to a second reading, skipping ahead to Corollary \ref{cor_monotonicity}.  Immediately after, we build on this formula in section \ref{sec_time_flat} to prove our main result, Theorem \ref{thm_tf_monotonicity}.  The main difference between the formulae in Theorems \ref{thm_tf_monotonicity} and \ref{thm_monotonicity} is that the former is in terms of the geometry of $\Sigma$, while the latter is in terms of the geometry of the swept-out hypersurface.

\begin{definition}
\label{def_uae}
A smooth family of embeddings $\Phi_\lambda : \Sigma \to N$ with initial flow velocity $\vec \xi = \left.\frac{\partial \Phi_\lambda}{\partial \lambda}\right|_{\lambda=0}$  is  \emph{uniformly area expanding} (at $\lambda =0$) if 
$$\left.\frac{\partial }{\partial \lambda}dA_\lambda\right|_{\lambda=0} = dA_0,$$
where $\Sigma_\lambda = \Phi_\lambda(\Sigma)$ and $dA_\lambda$ is the area form of $\Sigma_\lambda$.
By the first variation of area formula, this is equivalent to the condition
$$-\langle \vec \xi, \vec H\rangle=1,$$
where $\vec H$ is the mean curvature vector of $\Sigma_0$.
\end{definition}

Another interpretation of uniformly area expanding flows, adopted in \cite{imcf_spacetime}, is as follows.  Define the \emph{inverse mean curvature vector} of $\Sigma$:
$$\vec I = \frac{-\vec H}{\langle \vec H, \vec H \rangle},$$
assuming $\vec H$ is spacelike. 
Since $\vec I$ and $\vec I^\perp$ comprise a frame of $T^\perp \Sigma$, we can write
\begin{equation}
\label{eqn_gen_imcvf}
\vec \xi =  \vec I + \beta \vec I^\perp
\end{equation}
for some function  $\beta$. The coefficient of 1 on $\vec I$ is equivalent to 
$\vec \xi$ being uniformly area expanding.  Note that $\beta=\langle \vec \xi, \vec H^\perp\rangle$, and that $\vec \xi$ is spacelike if and only if $|\beta|<1$.

\begin{theorem} [Equation (11) of \cite{MMS}] 
\label{thm_monotonicity}
Let $\Sigma$ be a closed 2-manifold and $(N,\langle \cdot, \cdot\rangle)$  a time-oriented  Lorentzian spacetime.  For $\lambda \in [0, \epsilon)$, let $\Phi_\lambda : \Sigma \to N$ be a smooth family of spacelike embeddings, and set $\Sigma_\lambda = \Phi_\lambda(\Sigma)$.   Assume that mean curvature vector $\vec H$ of $\Sigma_0$ is spacelike, and that initial flow velocity $\vec \xi = \left.\frac{\partial \Phi_\lambda}{\partial \lambda}\right|_{\lambda=0}$ is outward-spacelike and uniformly
area expanding.  Then
\begin{align}
\left.\frac{d}{d\lambda} m_H(\Sigma_\lambda)\right|_{\lambda=0} \!\!&=
\sqrt{\frac{|\Sigma|}{(16\pi)^3}}  \Bigg\{4\pi(2-\chi(\Sigma))+\int_\Sigma\! \Bigg[16\pi( \mu - \beta J(\nu))+  |\mathring A|^2
+2\beta \langle \mathring A, \mathring p_\Sigma \rangle_\Sigma +|\mathring p_\Sigma|^2
 \nonumber\\
&\qquad+2\left(\frac{| \nabla^\Sigma H|^2}{H^2}+ 2\beta \ol p\left(\frac{\nabla^\Sigma H}{H}\right) + |\ol p|^2 \right)   - 2\beta \divm_\Sigma (\ol p) \,dA\Bigg]\Bigg\}.\label{eqn_mms}
\end{align}
\end{theorem}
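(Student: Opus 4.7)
The plan is to differentiate $m_H(\Sigma_\lambda)$ via the product rule.  The uniformly area expanding condition yields $\left.\frac{d|\Sigma_\lambda|}{d\lambda}\right|_{\lambda=0} = |\Sigma|$, which handles the area factor and reduces the theorem to the computation of $\left.\frac{d}{d\lambda}\int_{\Sigma_\lambda}\langle\vec H, \vec H\rangle\, dA\right|_{\lambda=0}$.  Since $\vec \xi$ is spacelike and normal to $\Sigma$, the family $\Phi_\lambda$ locally sweeps out a spacelike hypersurface $M \subset N$.  Let $\vec n$ be its future-timelike unit normal, $\nu$ the outward unit normal of $\Sigma$ within $M$, and let $k$, $A$ be the second fundamental forms of $M \subset N$ and $\Sigma \subset M$, respectively.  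The orthogonal decomposition
$$\vec H = -H\nu + (\tr_\Sigma k)\vec n$$
yields the key identity $\langle \vec H, \vec H\rangle = H^2 - (\tr_\Sigma k)^2$.  A short computation using (\ref{eqn_xi}) shows that $\tr_\Sigma k = \beta H$ and $\vec\xi = \nu/H$, so the flow of $\Sigma_\lambda$ inside $M$ is ordinary inverse mean curvature flow, with different choices of $\beta$ corresponding to different swept-out hypersurfaces $M$.

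The main computation then splits into the variations of $\int H^2\, dA$ and $\int (\tr_\Sigma k)^2\, dA$.  For the first, I would apply the classical Geroch argument: combining the Riemannian first-variation formula for $H$ under IMCF in $M$ with the Gauss equation for $\Sigma \subset M$,
$$2K^\Sigma = R^M - 2\Ric^M(\nu,\nu) + H^2 - |A|^2,$$
and Gauss--Bonnet produces the $4\pi(2-\chi(\Sigma))$ contribution together with the $|\mathring A|^2$ and $|\nabla^\Sigma H|^2/H^2$ terms, plus a residual scalar-curvature term $R^M$.  The Hamiltonian constraint (\ref{constraint1}) substitutes $R^M = 16\pi\mu + |k|^2 - (\tr_M k)^2$, and decomposing $k$ into its three $\Sigma$-adapted pieces --- $p_\Sigma := k|_{T\Sigma}$, $\bar p(X) := k(X,\nu)$, and $k(\nu,\nu)$, with $p_\Sigma = \mathring p_\Sigma + \tfrac{1}{2}(\tr_\Sigma k)\, g|_\Sigma$ --- absorbs the quadratic $k$ terms into $|\mathring p_\Sigma|^2$, $|\bar p|^2$, and cross-couplings with $\tr_\Sigma k = \beta H$.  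To handle $\int (\tr_\Sigma k)^2\, dA$, I would differentiate the ambient tensor $k$ along $\vec\xi$; the tangential divergence of $k - (\tr_M k)g$ is then replaced, via the momentum constraint (\ref{constraint2}), by $8\pi J$.  The $\nu$-component yields the $16\pi\beta J(\nu)$ term, while the $T\Sigma$-components, after one integration by parts on $\Sigma$, leave behind the surviving remainder $-2\beta\,\divm_\Sigma(\bar p)$ appearing in the statement.

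The principal obstacle is algebraic bookkeeping: the computation involves many irreducible tensorial pieces and many derivatives that interact nontrivially, and the mix of Lorentzian and Riemannian sign conventions demands vigilance.  One must verify carefully that all cross-terms with coefficient $2\beta$ assemble consistently from the tracefree decompositions of $A$ and $p_\Sigma$, and that precisely one copy of $\divm_\Sigma(\bar p)$ survives the integrations by parts.  Once the bookkeeping is complete, the individual contributions consolidate into exactly the stated formula.
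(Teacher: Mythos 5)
Your proposal reproduces the paper's proof essentially verbatim: you pass to the swept-out hypersurface $M$, reinterpret the flow as IMCF in $M$, rewrite $\langle\vec H,\vec H\rangle = H^2 - (\trSigmak)^2$, apply the Geroch/Gauss--Bonnet computation to the $H^2$ piece, use the Hamiltonian constraint to replace $R^M$, compute the variation of $(\trSigmak)^2$ via a divergence identity, and invoke the momentum constraint --- this is precisely the paper's route. One small caution: you define $p_\Sigma := k|_{T\Sigma}$ and $\ol p(X) := k(X,\nu)$, but the theorem's formula is stated in terms of $p = (\tr_M k)g - k$; restricted to $T\Sigma$ these differ by a sign (indeed $\mathring p_\Sigma = -\mathring k|_{T\Sigma}$ and $\ol p(X) = -k(X,\nu)$), so the $2\beta$ cross-terms you write will only match (\ref{eqn_mms}) once you make that sign conversion --- exactly the kind of bookkeeping hazard you already flagged.
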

To explain the notation:
\begin{compactitem}
\item $\Sigma_0$ is identified with $\Sigma$, $\chi(\Sigma)$ is its Euler characteristic, and $|\Sigma|$ is its area.
\item $\nu = \frac{\vec \xi}{|\vec \xi|}$ is the normalized flow velocity.
\item $\mu$ and $J$ are the energy and momentum density of $(M,g,k)$ (cf. the constraint equations (\ref{constraint1})--(\ref{constraint2})),
where $M= \bigcup_{\lambda \in [0,\epsilon)} \Sigma_\lambda$ is the spacelike hypersurface-with-boundary swept out by $\Sigma_\lambda$,  $g$ is its induced Riemannian metric, and $k$ is its second fundamental form in the future-timelike normal direction.
\item $p$ is defined to be $(\tr_M \!k)g - k$,  $p_\Sigma$ is the restriction of $p$ to $T\Sigma$,  and $\mathring p_\Sigma$ is the trace-free part of $p_\Sigma$. $\ol p$ is the 1-form on $\Sigma$ given by $\ol p(X) = p(X,\nu)$.
\item $A$ is the second fundamental form of $\Sigma$ as a submanifold of $M$ in the direction $\nu$ (cf. equation (\ref{eqn_k})), and $\mathring A$ is its trace-free part.
\item $\langle \cdot, \cdot \rangle_\Sigma$ and $|\cdot|^2$ are the tensor inner product and norm  on $\Sigma$ induced from $\langle \cdot, \cdot \rangle$.
\item $H = \tr_\Sigma A$ is the mean curvature of $\Sigma$ as a submanifold of $M$ in the direction $\nu$, and $\nabla^\Sigma $ is the gradient on $\Sigma$.
\end{compactitem}
Proofs of many intermediate formulae are deferred to the appendix.  Before giving the proof, we provide some preliminary details:
\vspace{2mm}
\paragraph{\emph{The swept-out hypersurface}} We will perform our calculations
with respect to the spacelike hypersurface $(M,g,k)$ swept out by the evolving
$\Sigma_\lambda$.  
We frame the normal bundle of $\Sigma$ by $\nu$ and $\nu^\perp$  and observe that $\nu$ is tangent to $M$
by definition.
The mean curvature vector of $\Sigma$ in $N$ decomposes
as
\begin{equation}
\label{eqn_vec_H_decomp}
-\vec H = H \nu - (\trSigmak) \nu^\perp,
\end{equation}
where $H$ is the  mean curvature of $\Sigma$ inside $M$ in the direction $\nu$,
and $\trSigmak = k(e_1,e_1) + k(e_2,e_2)$ in any local orthonormal frame $\{e_1, e_2\}$ of $T\Sigma$.  (This can be seen by tracing $\nabla^N - \nabla^\Sigma = (\nabla^N - \nabla^M) + (\nabla^M-\nabla^\Sigma)$ over $\Sigma$.)
Direct computation shows
\begin{equation}
\label{eqn_imcf_sweepout}
\vec \xi = -\frac{\nu}{\langle \nu, \vec H \rangle} = \frac{\nu}{H},
\end{equation}
which implies that the $\Sigma_\lambda$ \emph{evolve by inverse mean curvature flow in the hypersurface} $(M,g)$, a fact to be used in the proof of the theorem.  We also use this observation to derive an alternate expression for $\beta$.
Observe
$$-\vec H^\perp = -(\trSigmak) \nu + H \nu^\perp,$$
so that
\begin{align*}
\vec I + \beta \vec I^\perp &=  -\frac{\vec H+ \beta \vec H^\perp}{\langle \vec H, \vec H \rangle} \\
&=  \frac{(H-\beta \trSigmak)\nu +(\beta H-\trSigmak)\nu^\perp }{H^2 - (\trSigmak)^2} \\
&= \frac{\nu}{H}
\end{align*}
by setting 
\begin{equation}
\label{eqn_beta}
\beta = \frac{\trSigmak}{H}.
\end{equation}

\begin{proof}[Proof of Theorem \ref{thm_monotonicity}]
It will be useful to define a 2-tensor on $M$:
\begin{equation}
\label{eqn_p}
p= (\tr_M \!k)g - k,
\end{equation}
which satisfies:
\begin{equation}
\label{eqn_p_nu_nu}
p(\nu,\nu) = \trSigmak,
\end{equation}
a 1-form on $\Sigma$
$$\ol p(X) = p(X,\nu),$$
and a two-tensor $p_\Sigma$ on $\Sigma$ given by restricting $p$ to $T\Sigma$. 

We begin by using the decomposition (\ref{eqn_vec_H_decomp}) to rewrite the Hawking mass as:
\begin{align}
m_H(\Sigma) &= \sqrt{\frac{|\Sigma|}{16\pi}} \left[1 - \frac{1}{16\pi} \int_\Sigma\left( H^2 - (\trSigmak)^2\right) dA\right]\nonumber\\
 &= \sqrt{\frac{|\Sigma|}{16\pi}} \left[1 - \frac{1}{16\pi} \int_\Sigma \left( H^2 - p(\nu,\nu)^2\right) dA\right].\label{eqn_m_H_alt}
\end{align}
Thus, we require variational formulae for $|\Sigma|,$ $H^2$, $p(\nu,\nu)$, and $dA$ under IMCF in $M$.
\begin{lemma}
\label{lemma_variation}
Let $(M,g)$ be a Riemannian 3-manifold, and let $\Sigma_\lambda$ be a solution to inverse mean curvature flow  of 2-surfaces in $M$, say with  area forms $dA_\lambda$ and
mean curvatures $H_\lambda$ 
with respect to outward unit normals $\nu_\lambda$.  Let $\Sigma = \Sigma_0$.
Then:
\begin{align}
\label{eqn_var_dA}\left.\frac{\partial}{\partial \lambda} dA_\lambda \right|_{\lambda=0} &= dA\\
\label{eqn_var_A}\left.\frac{\partial |\Sigma_\lambda|}{\partial \lambda}  \right|_{\lambda=0} &= |\Sigma|\\
\label{eqn_var_H}\left.\frac{\partial H^2_\lambda}{\partial \lambda}  \right|_{\lambda=0} &= -2 H \Delta_\Sigma \left(\frac{1}{H} \right) - R + 2K - H^2 - |A|^2
\end{align}
where $\Delta_\Sigma$ is the Laplacian on $\Sigma$, $R$ is the scalar curvature of $g$, $K$ is the Gauss curvature of $\Sigma$, and $A$ is the second fundamental form
of $\Sigma$ in $M$ in the direction of $\nu=\nu_0$.  Here $dA$, $H$, etc. denote the area form, mean curvature, etc., of $\Sigma$.

Additionally, if $k$ is a symmetric $(0,2)$-tensor on $M$
and $p$ is defined by (\ref{eqn_p}), then
\begin{equation}
\label{eqn_var_p}\left.\frac{\partial (p(\nu_\lambda, \nu_\lambda)^2)}{\partial \lambda}  \right|_{\lambda=0} =\frac{2p(\nu,\nu)}{H} \left((\nabla^M_{\nu} p)(\nu,\nu) +2p\left(\nu, \frac{\nabla^\Sigma H}{H}\right)\right),
\end{equation}
where $\nabla^M$ is the Levi-Civita connection on $M$.
\end{lemma}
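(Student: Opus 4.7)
The plan is to prove each of (\ref{eqn_var_dA})--(\ref{eqn_var_p}) by reducing it to a standard variational identity for a normal flow in the Riemannian manifold $(M,g)$, specialized to the IMCF speed $f = 1/H$. Indeed, by (\ref{eqn_imcf_sweepout}), the surfaces $\Sigma_\lambda$ evolve by $\vec\xi = \nu/H$, so everything follows from three classical normal-flow formulas: (i) the first variation of area, $\partial_\lambda dA_\lambda = fH\,dA$; (ii) the variation of the outward unit normal, $\partial_\lambda \nu_\lambda = -\nabla^\Sigma f$ (viewed as a tangential vector field along $\Sigma$ in $M$); and (iii) the evolution of mean curvature, $\partial_\lambda H = -\Delta_\Sigma f - (|A|^2 + \Ric^M(\nu,\nu))\,f$.

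Formulas (\ref{eqn_var_dA}) and (\ref{eqn_var_A}) then follow at once from (i) with $f = 1/H$, which gives $\partial_\lambda dA_\lambda = dA$; integrating over $\Sigma$ yields $\partial_\lambda |\Sigma_\lambda| = |\Sigma|$. For (\ref{eqn_var_H}), I would multiply (iii) by $2H$ with $f = 1/H$ to obtain
\[
\partial_\lambda H^2 = -2H\Delta_\Sigma(1/H) - 2|A|^2 - 2\Ric^M(\nu,\nu),
\]
and then eliminate the ambient Ricci term using the Gauss equation
\[
R = 2K + 2\Ric^M(\nu,\nu) - H^2 + |A|^2,
\]
consistent with the sign convention $A = -\langle \vec\II,\nu\rangle$ from (\ref{eqn_k}). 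Substitution gives exactly the right-hand side of (\ref{eqn_var_H}).

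For (\ref{eqn_var_p}), I would apply the chain rule, viewing $p(\nu_\lambda,\nu_\lambda)$ as a function on $\Sigma$ via pullback along $\Phi_\lambda$, to get
\[
\partial_\lambda \bigl[p(\nu_\lambda,\nu_\lambda)^2\bigr] = 2p(\nu,\nu)\bigl[(\nabla^M_{\vec\xi} p)(\nu,\nu) + 2p(\partial_\lambda \nu_\lambda,\nu)\bigr].
\]
Substituting $\vec\xi = \nu/H$ reduces the first inner term to $(\nabla^M_\nu p)(\nu,\nu)/H$, while (ii) with $f = 1/H$ yields $\partial_\lambda \nu_\lambda = \nabla^\Sigma H / H^2$, so the second inner term becomes $(2/H)\,p(\nabla^\Sigma H/H, \nu)$. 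Combining these gives (\ref{eqn_var_p}). The computations are essentially bookkeeping rather than a genuine difficulty; the main care required is in the sign conventions, in particular the convention $A = -\langle \vec\II, \nu\rangle$ from (\ref{eqn_k}) when applying the Gauss equation in step (\ref{eqn_var_H}), and in correctly pulling back along $\Phi_\lambda$ when differentiating $p(\nu_\lambda,\nu_\lambda)$.
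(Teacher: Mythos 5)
Your proposal is correct and follows essentially the same route as the paper: (\ref{eqn_var_dA})--(\ref{eqn_var_A}) from first variation of area with speed $1/H$, (\ref{eqn_var_H}) from the standard evolution of mean curvature under a normal flow combined with the twice-traced Gauss equation to eliminate $\Ric^M(\nu,\nu)$, and (\ref{eqn_var_p}) by the chain rule together with the identity $\partial_\lambda\nu_\lambda|_{\lambda=0}=-\nabla^\Sigma(1/H)=\nabla^\Sigma H/H^2$. The only difference is cosmetic: the paper cites the first three as standard and spells out only (\ref{eqn_var_p}) in the appendix (including a short derivation of $\dot\nu_0=-\nabla^\Sigma\eta$), whereas you fill in the intermediate steps for (\ref{eqn_var_H}); both arrive at identical formulas and use the same sign conventions.
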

This is proved in the appendix.

Now we differentiate (\ref{eqn_m_H_alt}) in $\lambda$ and use Lemma \ref{lemma_variation}:
\begin{align*}
\left.\frac{d m_H(\Sigma_\lambda)}{d\lambda}\right|_{\lambda=0} &= \frac{1}{2} \sqrt{\frac{|\Sigma|}{16\pi}} \left[1 - \frac{1}{16\pi} \int_\Sigma \left(H^2 - p(\nu,\nu)^2 \right) dA\right]\\
&\qquad+  \sqrt{\frac{|\Sigma|}{16\pi}} \left[- \frac{1}{16\pi} \int_\Sigma\left. \frac{\partial}{\partial \lambda}\left( H^2 - p(\nu,\nu)^2\right) dA\right|_{\lambda=0}\right]\\
&= \sqrt{\frac{|\Sigma|}{(16\pi)^3}}  \Bigg\{8\pi - \int_\Sigma \Bigg[\frac{1}{2} H^2-\frac{1}{2}p(\nu,\nu)^2-2 H \Delta_\Sigma \left(\frac{1}{H} \right) - R + 2K - H^2 - |A|^2\\
&\qquad-\frac{2p(\nu,\nu)}{H}\left((\nabla^M_{\nu} p)(\nu,\nu) + 2p\left(\nu, \frac{\nabla^\Sigma H}{H}\right)\right)
+ H^2 - p(\nu,\nu)^2\Bigg] dA\Bigg\}.
\end{align*}

The next step is to relate $\nabla^M_\nu p$ to the divergence of $p$, which will eventually allow the use of the constraint equations.  For this, we use
\begin{equation}
(\div_M (p))(\nu) = (\nabla^M_\nu p)(\nu,\nu) + \div_\Sigma (\ol p) + H p(\nu,\nu) - \langle A, p_\Sigma\rangle_\Sigma \label{identity_div_M},
\end{equation}
which is proved in the appendix. Here, $\langle \cdot, \cdot \rangle_\Sigma$ is the restriction of $\langle \cdot, \cdot \rangle$ to $T\Sigma$.  In the appendix, we also use the constraint equations to show:
\begin{align}
R &= 16\pi \mu  +|p_\Sigma|_\Sigma^2 - \frac{1}{2} ( \tr_\Sigma p_\Sigma)^2+ 2|\ol p|^2 +\frac{1}{2} p(\nu,\nu)^2 - p(\nu,\nu)\tr_\Sigma p_\Sigma, \label{identity_R}\\
 -2 \div_M (p) &= 16\pi J.  \label{identity_J}
\end{align}
Below, we use the Gauss--Bonnet theorem, integration by parts, identities (\ref{identity_div_M})--(\ref{identity_J}), and the fact that $\beta = \frac{\trSigmak}{H}=\frac{p(\nu,\nu)}{H}$ (by (\ref{eqn_beta}) and (\ref{eqn_p_nu_nu})).
\begin{align*}
\left.\frac{d m_H(\Sigma_\lambda)}{d\lambda}\right|_{\lambda=0}&= \sqrt{\frac{|\Sigma|}{(16\pi)^3}}  \left\{4\pi(2-\chi(\Sigma))+\int_\Sigma\left[- \frac{1}{2} H^2+\frac{3}{2}p(\nu,\nu)^2+\frac{2| \nabla^\Sigma H|^2}{H^2}  + |A|^2 + R\right.\right.\\
&\;\;\left.\left.+\frac{2p(\nu,\nu)}{H}\left(\!(\div_M (p))(\nu) - \div_\Sigma (\ol p) - H p(\nu,\nu) + \langle A, p_\Sigma\rangle_\Sigma+ 2\ol p\left(\frac{\nabla^\Sigma H}{H}\right)
\!\!\right)\right]\!dA
\right\}\\
&= \sqrt{\frac{|\Sigma|}{(16\pi)^3}}  \left\{4\pi(2-\chi(\Sigma))+\int_\Sigma\left[- \frac{1}{2} H^2+\frac{3}{2}p(\nu,\nu)^2+\frac{2| \nabla^\Sigma H|^2}{H^2}  + |A|^2 \right.\right.\\
&\qquad + 16\pi \mu  +|p_\Sigma|_\Sigma^2 - \frac{1}{2} ( \tr_\Sigma p_\Sigma)^2+ 2|\ol p|^2 +\frac{1}{2} p(\nu,\nu)^2 - p(\nu,\nu)\tr_\Sigma p_\Sigma -16\pi\beta J(\nu)\\
&\qquad\left.\left.+\frac{2p(\nu,\nu)}{H}\left( - \div_\Sigma (\ol p) - H p(\nu,\nu) + \langle A, p_\Sigma\rangle_\Sigma +2\ol p\left(\frac{\nabla^\Sigma H}{H}\right)
\right)\right]dA
\right\}\\
&= \sqrt{\frac{|\Sigma|}{(16\pi)^3}}  \left\{4\pi(2-\chi(\Sigma))+\int_\Sigma\left[- \frac{1}{2} H^2+\frac{2|\nabla^\Sigma H|^2}{H^2}  + |A|^2 \right.\right.\\
&\qquad + 16\pi( \mu - \beta J(\nu))  +|p_\Sigma|_\Sigma^2 - \frac{1}{2} ( \tr_\Sigma p_\Sigma)^2+ 2|\ol p|^2  -\beta H \tr_\Sigma p_\Sigma \\
&\qquad\left.\left. - 2\beta \div_\Sigma (\ol p) + 2\beta \langle A, p_\Sigma\rangle_\Sigma + 4\beta \ol p\left(\frac{\nabla^\Sigma H}{H}\right)\right]dA\right\}.
\end{align*}
It is convenient to work with the traceless parts of $p_\Sigma$ and $A$:
\begin{align*}
\mathring p_\Sigma &= p_\Sigma - \frac{1}{2}\left( \tr_\Sigma p_\Sigma\right) g_\Sigma,\\
\mathring A &= A - \frac{1}{2} H g_\Sigma,
\end{align*}
where $g_\Sigma$ is the restriction of $g$ to $T\Sigma$.  Elementary computations show:
\begin{align*}
|\mathring p_\Sigma|^2 &= |p_\Sigma|^2 -\frac{1}{2} \left(\tr_\Sigma p_\Sigma\right)^2,\\
|\mathring A|^2 &= |A|^2 - \frac{1}{2}H^2,\\
\langle \mathring p_\Sigma, \mathring A\rangle_\Sigma &= \langle p_\Sigma, A \rangle_\Sigma - \frac{1}{2} H \tr_\Sigma p_\Sigma.
\end{align*}

Then we have:
\begin{align*}
\left.\frac{d m_H(\Sigma_\lambda)}{d\lambda}\right|_{\lambda=0} &= \sqrt{\frac{|\Sigma|}{(16\pi)^3}}  \left\{4\pi(2-\chi(\Sigma))+\int_\Sigma \bigg(16\pi( \mu - \beta J(\nu))+\left[|\mathring A|^2
+2\beta \langle \mathring p_\Sigma, \mathring A \rangle_\Sigma +|\mathring p_\Sigma|^2 \right]
 \right.\\
&\qquad  \left.\left.+2\left[\frac{| \nabla^\Sigma H|^2}{H^2} + 2\beta \ol p\left( \frac{\nabla^\Sigma H}{H}\right) + |\ol p|^2 \right]   - 2\beta \div_\Sigma (\ol p)\,\right) dA\right\}.
 \end{align*}
which is (\ref{eqn_mms}).
\end{proof}

Malec, Mars, and Simon observed formula (\ref{eqn_mms}) implies a monotonicity result for the Hawking mass:

\begin{corollary}
\label{cor_monotonicity}
Under the hypotheses of Theorem \ref{thm_monotonicity},  if $\Sigma$ is connected, if $N$ satisfies the dominant energy condition, and if $\divm_\Sigma (\ol p) = 0$ then 
$$\left.\frac{d}{d\lambda} m_H(\Sigma_\lambda)\right|_{\lambda=0} \geq 0.$$
\end{corollary}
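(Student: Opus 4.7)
The plan is to show that every term on the right-hand side of formula (\ref{eqn_mms}) is nonnegative under the stated hypotheses, so the sum is as well. The key background fact is that since $\vec\xi$ is outward-spacelike and uniformly area expanding with $\vec\xi = \vec I + \beta \vec I^\perp$, we have $|\beta|<1$ (as noted immediately after Definition \ref{def_uae}).

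First I would handle the topological term: since $\Sigma$ is connected and closed, $\chi(\Sigma)\in\{2,0,-2,\ldots\}$, so $4\pi(2-\chi(\Sigma))\geq 0$. Next I would treat the energy-momentum term: the vector field $\beta\nu$ is tangent to the swept-out hypersurface $M$ (because $\nu$ is), and has length $|\beta|<1$; hence the form of the dominant energy condition recorded in (\ref{DEC}) yields
\[
\mu - \beta J(\nu) = \mu - J(\beta\nu) \geq 0.
\]
The last term $-2\beta\,\div_\Sigma(\bar p)$ vanishes identically by the hypothesis $\div_\Sigma(\bar p)=0$.

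The remaining two brackets are pure quadratic forms, and the heart of the argument is to recognize each as positive semidefinite when $|\beta|<1$. For the first bracket, I would complete the square
\[
|\mathring A|^2 + 2\beta\langle \mathring A,\mathring p_\Sigma\rangle_\Sigma + |\mathring p_\Sigma|^2 = |\mathring A + \beta\,\mathring p_\Sigma|^2 + (1-\beta^2)|\mathring p_\Sigma|^2 \geq 0.
\]
For the second bracket, letting $v = \nabla^\Sigma H/H$ and $\omega^\sharp$ denote the vector field $\Sigma$-dual to $\bar p$, the same completion of squares gives
\[
\frac{|\nabla^\Sigma H|^2}{H^2} + 2\beta\,\bar p\!\left(\frac{\nabla^\Sigma H}{H}\right) + |\bar p|^2 = \bigl|v + \beta\omega^\sharp\bigr|^2 + (1-\beta^2)|\omega^\sharp|^2 \geq 0.
\]
Both lower bounds use only $|\beta|<1$. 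Assembling these pieces in formula (\ref{eqn_mms}) yields the claim.

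There is no real obstacle here; the only place where care is needed is justifying $\mu - \beta J(\nu) \geq 0$, where one must be sure that the vector used in the DEC inequality is a tangent vector to $M$ of length at most one — and that is precisely why $\nu$ (the $M$-tangent part of the normalized flow direction) appears rather than a general spacetime vector.
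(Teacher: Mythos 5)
Your proof is correct and follows essentially the same route as the paper: nonnegativity of the topological term for connected $\Sigma$, the dominant energy condition applied with the sub-unit-length $M$-tangent vector $\beta\nu$, vanishing of the divergence term, and pointwise nonnegativity of the two quadratic brackets via $|\beta|<1$. The only difference is that you make the completion of squares explicit, whereas the paper merely asserts the pointwise inequalities; this is a helpful elaboration rather than a new argument.
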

\begin{proof}
Consider the last equation in the above proof.  For $\Sigma$ connected, $\chi(\Sigma) \leq 2$.
The fact that $\vec \xi$ is spacelike implies $|\beta| < 1$, which in turn implies pointwise $\geq 0$ inequalities for the terms in square brackets.
Next, recall from (\ref{DEC}) that the dominant energy condition on $N$ implies that $\mu \geq |J(\nu)| \geq \beta J(\nu)$. 
Thus,  if $\div_\Sigma (\ol p)$ vanishes, then $\left.\frac{d m_H(\Sigma_\lambda)}{d\lambda}\right|_{\lambda=0} \geq 0$.  
\end{proof}

This motivates a flow of spacelike 2-surfaces in $N$ for which the Hawking mass is nondecreasing: require $\{\Sigma_\lambda\}$ to be uniformly area expanding and $\ol p$ to be divergence-free on each $\Sigma_\lambda$.  This was first proposed in \cite{MMS}.
It is not immediately obvious that $\ol p$ is defined without referring to the $\emph{a priori}$ swept-out hypersurface $M$.  However, a simple computation in the appendix shows that $\ol p = \alpha_\nu$, the connection 1-form of $\Sigma$ in the direction $\nu=\frac{\vec \xi}{|\vec \xi|}$, which depends only on $\Sigma$ and its normal bundle in $N$.  Generally, if such a flow is constrained to live inside a fixed spacelike hypersurface $M$, it is not possible to require $\div_\Sigma(\ol p)$ to vanish on each $\Sigma_\lambda$.  Asking $\ol p = \alpha_\nu$ to be divergence-free means asking $\nu$ to be a straight out direction (i.e., a minimizer of $\mathcal{C}$) in the sense of section \ref{sec_connection}.  Generally, such a direction is not  tangent to $M$.  

Thus, it is more appropriate to take the ``invariant'' approach of \cite{MMS},  allowing $\Sigma_\lambda$ to evolve within $N$.  Such a flow may have a reasonable existence theory, but it is not clear that the $\Sigma_\lambda$ have good asymptotics at infinity --- which is essential for detecting the total mass or energy of $N$.  For example, let $N=\R^{3,1}$ (the Minkowski spacetime), and let $\Sigma$ be a closed surface in $N$ with positive Hawking mass (see Example \ref{ex_sphere}).  For a flow beginning at $\Sigma$ for which the Hawking mass is nondecreasing, something must go wrong, since the total mass and energy of $\R^{3,1}$ are zero.

Another flow discussed in \cites{MMS, imcf_spacetime} is to require $\beta=\beta(\lambda)$ to be constant on each $\Sigma_\lambda$ and between $-1$ and $1$.  In that case, the problematic term $-2\beta \div_{\Sigma_\lambda} (\ol p)$ integrates over $\Sigma_\lambda$ to zero, which leads to monotonicity of the Hawking mass.  However, it was proved in \cite{imcf_spacetime} that such a flow is always forwards-backwards parabolic (and so solutions beginning with general initial data do not exist).

\section{Time flat surfaces}
\label{sec_time_flat}
In this section we provide examples of time flat surfaces and derive  a variation formula for the Hawking mass (Theorem \ref{thm_tf_monotonicity}) that is well-suited to studying them.

Recall Definition \ref{def_TF} of a time flat surface from the introduction.  Based on section \ref{sec_connection}, an equivalent condition is that
$\nu_H := -\frac{\vec H}{|\vec H|}$ is a minimizer of $\mathcal{C}$ (i.e., a ``straight out'' direction), or, in light of Remark \ref{rmk_conn_lap},
 that $\nu_H$ is an eigenfunction of the Laplacian $(\nabla^\perp)^* \nabla^\perp$.

Examples of time flat surfaces include strictly mean-convex surfaces contained in a spacelike hyperplane in $\R^{3,1}$.  More generally, 
any strictly mean-convex surface contained in $t=$ constant slice of a static spacetime 
\begin{equation}
\label{eqn_static}
\langle \cdot, \cdot \rangle = -u^2 dt^2 +g
\end{equation}
is time flat.  Here, $(M,g)$ is a fixed Riemannian 3-manifold and $u>0$ is a function on $M$. More generally still, a strictly mean-convex
surface in a totally geodesic spacelike hypersurface $M$ of a spacetime is time flat. Next, any spherically symmetric sphere
(with spacelike mean curvature vector) in a spherically symmetric spacetime is time flat. Finally, any surface 
whose mean curvature vector is spacelike and parallel with respect to $\nabla^\perp$ is time flat.

The motivation for the time flat condition arises from several considerations:
\begin{itemize}
\item What is a canonical choice of outward-spacelike unit normal to $\Sigma$?  By analogy with the Frenet frame for curves in 3-space, 
$\nu_H =-\frac{\vec H}{|\vec H|}$ is a natural choice (and ${\nu_H}^\perp$ would play the role of the binormal).  On the other hand, minimizers of $\mathcal{C}$ discussed in section 
\ref{sec_connection} (the so-called ``straight out'' directions) are also geometrically natural.  A time flat surface is one for which these notions coincide.
\item Consider the problem of flowing $\Sigma$ inside $N$ with a uniformly area expanding velocity.  Generally, the variation of the Hawking mass along the flow could have any sign.  However, if $\Sigma$ is time flat, then \emph{any} such flow leads to a nonnegative variation of the Hawking mass (Corollary \ref{cor_tf}).
\item The Wang--Yau quasi-local mass of  $\Sigma$ is defined by optimizing certain isometric embeddings of $\Sigma$ into $\R^{3,1}$ \cites{wang_yau1, wang_yau2}.  If $\Sigma$ is time flat, then its natural embedding into a $t= $ constant slice of the Minkowski spacetime is optimal \cite{cww}.
\end{itemize}
Further motivation is given in section \ref{sec_discussion}.

We now prove Theorem \ref{thm_tf_monotonicity}, stated in the introduction, which illustrates how time flat surfaces relate to uniformly area expanding flows and the Hawking mass.  The essential difference between the formulas in Theorems \ref{thm_tf_monotonicity} and \ref{thm_monotonicity} is that the former is expressed in terms of the  geometry of $\Sigma$, while the latter is expressed in terms of the geometry of the swept-out hypersurface.

\begin{proof}[Proof of Theorem \ref{thm_tf_monotonicity}]
The idea is to begin with (\ref{eqn_mms}) and rewrite all quantities in the integral in terms of the $\nu_H$ and $\nu_H^\perp$ directions, without referring to the swept-out hypersurface $M$.   First, we give some preliminaries: define $\vec \II$ to be the second fundamental form of $\Sigma$ in $N$ and $\mathring{\vec \II}$ its traceless part.

Using the definitions of $\nu, \vec \xi, \nu_H,$ and $\beta$, we have
\begin{align}
\label{eqn_nu_nu_H}
\nu &= \frac{1}{\sqrt{1-\beta^2}} \nu_H + \frac{\beta}{\sqrt{1-\beta^2}} \nu_H^\perp\\
&= \cosh(\theta) \nu_H + \sinh(\theta) \nu_H^\perp \label{eqn_nu_theta},
\end{align}
for the function $\theta = \tanh^{-1}(\beta)$.

\vspace{2mm}
\paragraph{\emph{The $\mu$ and $J$ terms}} By the definition of the the energy and momentum densities, Einstein's equation, and (\ref{eqn_nu_nu_H}), we have
\begin{align*}
16\pi (\mu - \beta J(\nu)) &= 2G (\nu^\perp, \nu^\perp) - 2\beta G(\nu^\perp,\nu)\\
&= 2G(\nu^\perp, \nu^\perp-\beta\nu)\\
&=\frac{2}{1-\beta^2} G(\beta \nu_H + \nu_H^\perp, \beta \nu_H + \nu_H^\perp - \beta \nu_H -\beta^2 \nu_H^\perp)\\
&=2G(\nu_H^\perp, \beta \nu_H + \nu_H^\perp).
\end{align*}
Using the definition of $\nu_H$ and $\vec \xi$ and integrating, we have:
\begin{equation}
\label{eqn_G}
\int_\Sigma 16\pi (\mu - \beta J(\nu)) dA=\int_\Sigma 2G(-\vec H^\perp, \vec \xi^\perp)dA. 
\end{equation}

\vspace{2mm}
\paragraph{\emph{The $\mathring A$ and $\mathring p_\Sigma$ terms}}  Presently, we rewrite the terms:
\begin{equation}
\label{first_three_start}
\int_\Sigma \left(
|\mathring A|^2+2\beta \langle  \mathring A, \mathring p_\Sigma \rangle_\Sigma +|\mathring p_\Sigma|^2\right)\!dA.
\end{equation}
The first observation is that $A$ can be expressed without referring to $M$.  For $e_i, e_j$ tangent to $\Sigma$,
\begin{align*}
A(e_i,e_j) &= -\langle \nabla^M_{e_i} e_j, \nu\rangle\\
&= -\langle \nabla^N_{e_i} e_j, \nu\rangle\\
&= -\langle \vec \II (e_i,e_j), \nu\rangle,
\end{align*}
since $\nabla^N_{e_i} e_j - \nabla^M_{e_i}e_j$ is orthogonal to $\nu$. 
Taking the traceless parts, we have:
\begin{equation}
\label{eqn_A}
\mathring A(e_i,e_j) =-\langle\mathring{\vec  \II}(e_i,e_j), \nu\rangle.
\end{equation}

The second observation is that $\mathring p_\Sigma$ has a nicer expression.  By definition of $p_\Sigma$:
$$p_\Sigma (e_i,e_j) = (\tr_M k) g(e_i,e_j) - k(e_i,e_j)$$
Now, by the definition of $k$:
\begin{align*}
k(e_i,e_j) &= - \langle \nabla^N_{e_i}e_j, \nu^\perp\rangle\\
&= - \langle \vec\II(e_i,e_j), \nu^\perp\rangle.
\end{align*}
In particular,
\begin{equation}
\label{eqn_p_Sigma}
\mathring p_\Sigma(e_i,e_j) =  \langle \mathring {\vec \II}(e_i,e_j), \nu^\perp\rangle.
\end{equation}

We express formulas (\ref{eqn_A}) and (\ref{eqn_p_Sigma}) for $\mathring A$ and $\mathring p_\Sigma$ in terms of $\nu_H$ and $\nu_H^\perp$.
Define the scalar-valued second fundamental forms of $\Sigma$ in the directions $\nu_H$ and $\nu_H^\perp$:
\begin{align*}
\II_{\nu_H} &:= -\langle \vec \II, \nu_H\rangle\\
\II_{\nu_H^\perp} &:= -\langle \vec \II, \nu_H^\perp\rangle,
\end{align*}
and let $\mathring \II_{\nu_H}$ and $\mathring \II_{\nu_H^\perp}$ be their traceless parts.

Thus:
\begin{align*}
\mathring A &=-\langle\mathring  {\vec \II},  \frac{1}{\sqrt{1-\beta^2}} \nu_H + \frac{\beta}{\sqrt{1-\beta^2}} \nu_H^\perp\rangle\\
&= \frac{1}{\sqrt{1-\beta^2}}\mathring \II_{\nu_H}  + \frac{\beta}{\sqrt{1-\beta^2}}\mathring \II_{\nu_H^\perp},
\end{align*}
and:
\begin{align*}
\mathring p_\Sigma &=\langle\mathring {\vec \II},  \frac{\beta}{\sqrt{1-\beta^2}} \nu_H + \frac{1}{\sqrt{1-\beta^2}} \nu_H^\perp\rangle\\
&= -\frac{\beta}{\sqrt{1-\beta^2}}\mathring \II_{\nu_H}  - \frac{1}{\sqrt{1-\beta^2}}\mathring \II_{\nu_H^\perp}.
\end{align*}

Now, we compute the three terms in (\ref{first_three_start}):
\begin{align*}
|\mathring A|^2 &= \frac{1}{1-\beta^2} |\mathring \II_{\nu_H} |^2 +\frac{2\beta}{1-\beta^2} \langle \mathring \II_{\nu_H} ,\mathring \II_{\nu_H^\perp}\rangle +  \frac{\beta^2}{1-\beta^2} |\mathring \II_{\nu_H^\perp} |^2
\end{align*}

\begin{align*}
2\beta \langle \mathring A, \mathring p_\Sigma\rangle &= 2\beta \left(\frac{-\beta}{1-\beta^2}  |\mathring \II_{\nu_H}|^2 + \frac{-1-\beta^2}{1-\beta^2} \langle \mathring \II_{\nu_H}, \mathring \II_{\nu_H^\perp}\rangle + \frac{-\beta}{1-\beta^2} |\mathring \II_{\nu_H^\perp}|^2   \right)
\end{align*}

\begin{align*}
|\mathring p_\Sigma|^2 &= \frac{\beta^2}{1-\beta^2} |\mathring \II_{\nu_H} |^2 +\frac{2\beta}{1-\beta^2} \langle \mathring \II_{\nu_H} ,\mathring \II_{\nu_H^\perp}\rangle +  \frac{1}{1-\beta^2} |\mathring \II_{\nu_H^\perp} |^2.
\end{align*}

Summing the last three lines and integrating, we have 
\begin{equation}
\label{first_three_finish}
\int_\Sigma \left( |\mathring A|^2 + 2\beta\langle \mathring A, \mathring p_\Sigma \rangle + |\mathring p_\Sigma|^2\right)dA = \int_\Sigma \left( |\mathring \II_{\nu_H} |^2 + 2\beta \langle  \mathring \II_{\nu_H}, \mathring \II_{\nu_H^\perp}\rangle + |\mathring \II_{\nu_H^\perp}|^2\right)dA.
\end{equation}

\vspace{2mm}
\paragraph{\emph{The $H$ and $\ol p$ terms}} In this next part of the proof, we rewrite:
\begin{equation}
\label{eqn_next_three_start}
2 \int_\Sigma \left(\frac{| \nabla^\Sigma H|^2}{H^2} +2\beta \ol p\left( \frac{\nabla^\Sigma H}{H}\right) + |\ol p|^2\! \right)dA.
\end{equation}

We start with the relationship between $H$ and $|\vec H| := \sqrt{\langle \vec H, \vec H\rangle}$.  By (\ref{eqn_vec_H_decomp}),
\begin{align*}
H &= -\langle \vec H, \nu \rangle\\
 &= -\langle \vec H, \frac{1}{\sqrt{1-\beta^2}} \nu_H + \frac{\beta}{\sqrt{1-\beta^2}} \nu_H^\perp\rangle\\
&= \frac{|\vec H|}{\sqrt{1-\beta^2}}.
\end{align*}
Logarithmic differentiation shows:
$$\frac{\nabla^\Sigma H}{H} = \frac{\nabla^\Sigma |\vec H|}{|\vec H|} + \frac{\beta \nabla^\Sigma \beta}{1-\beta^2}.$$

In the appendix, we prove:
\begin{equation}
\ol p = \alpha, \label{identity_p_omega}
\end{equation}
where $\alpha$ is the connection 1-form associated to $\nu = \frac{\vec \xi}{|\vec \xi|}$ (cf. equation (\ref{eqn_conn_form})). By equation (\ref{eqn_dtheta}), $\alpha$
and $\alpha_H$ are related by
\begin{align*}
\alpha
&= \alpha_H - d\theta\\
&= \alpha_H -\frac{d\beta}{1-\beta^2}.
\end{align*}
since $\theta = \tanh^{-1}(\beta)$. We now compute the terms in (\ref{eqn_next_three_start}):
\begin{align*}
\left| \frac{\nabla^\Sigma H}{H}\right|^2 &= \left| \frac{\nabla^\Sigma |\vec H|}{|\vec H|}\right|^2 + \frac{2\beta\langle \nabla^\Sigma |\vec H|, \nabla^\Sigma \beta \rangle}{|\vec H|(1-\beta^2)} + \frac{\beta^2|\nabla^\Sigma \beta|^2}{(1-\beta^2)^2}\\
2\beta \ol p\left( \frac{\nabla^\Sigma H}{H}\right) &= 2\beta \left(\alpha_H -\frac{d\beta}{1-\beta^2}\right)\left( \frac{\nabla^\Sigma |\vec H|}{|\vec H|} + \frac{\beta \nabla^\Sigma \beta}{1-\beta^2}\right)\\
&=2\beta\left(\alpha_H\left(\frac{\nabla^\Sigma |\vec H|}{|\vec H|}\right)    + \frac{\beta}{1-\beta^2}\alpha_H(\nabla^\Sigma \beta)
-\frac{\langle \nabla^\Sigma \beta, \nabla^\Sigma |\vec H|\rangle}{|\vec H|(1-\beta^2)} -\frac{\beta |\nabla^\Sigma \beta|^2}{(1-\beta^2)^2}\right)\\
|\ol p|^2 &= |\alpha_H|^2 -\frac{2}{1-\beta^2}\alpha_H(\nabla^\Sigma \beta) + \frac{ |\nabla^\Sigma \beta|^2}{(1-\beta^2)^2}.
\end{align*}

Adding the last three formulae produces:
\begin{align*}
\frac{| \nabla^\Sigma H|^2}{H^2} +2\beta \ol p\left( \frac{\nabla^\Sigma H}{H}\right) + |\ol p|^2 &=  \left| \frac{\nabla^\Sigma |\vec H|}{|\vec H|}\right|^2 + 2\beta\alpha_H\left(\frac{\nabla^\Sigma |\vec H|}{|\vec H|}\right) + |\alpha_H|^2\\
&\qquad -2\alpha_H\left(\nabla^\Sigma \beta\right)+ \frac{|\nabla^\Sigma \beta|^2}{1-\beta^2}.
\end{align*}
Multiplying by 2, integrating (including integrating the second to last term above by parts), we have computed (\ref{eqn_next_three_start}) as:
\begin{align}
\label{eqn_next_three_finish}
2\int_\Sigma \left( \frac{| \nabla^\Sigma H|^2}{H^2} +2\beta \ol p\left( \frac{\nabla^\Sigma H}{H}\right) + |\ol p|^2\right)dA &= 2 \int_\Sigma  \Bigg(\left| \frac{\nabla^\Sigma |\vec H|}{|\vec H|}\right|^2 + 2\beta\alpha_H\left(\frac{\nabla^\Sigma |\vec H|}{|\vec H|}\right) +|\alpha_H|^2\nonumber\\
&\qquad + 2\beta \div_\Sigma(\alpha_H) + \frac{|\nabla^\Sigma \beta|^2}{1-\beta^2} \Bigg)dA.
\end{align}

\vspace{2mm}
\paragraph{\emph{The $- 2\beta \divm_\Sigma (\ol p)$ term}}
We now compute the last term in formula (\ref{eqn_mms}).  Recalling $\ol p =\alpha$, we have:

 \begin{align}
-2\int_\Sigma \beta \div_\Sigma(\alpha) dA &=-2\int_\Sigma \beta \div_\Sigma(\alpha_H - d\theta) dA \nonumber\\
&=-2\int_\Sigma \beta (\div_\Sigma(\alpha_H) - \Delta \theta) dA\nonumber\\
&=-2\int_\Sigma \left(\beta \div_\Sigma(\alpha_H) +\langle \nabla^\Sigma \beta, \nabla^\Sigma \theta\rangle\right)\! dA\nonumber\\
&=-2\int_\Sigma\left( \beta \div_\Sigma (\alpha_H) + \frac{|\nabla^\Sigma \beta|^2}{1-\beta^2}  \right)\!dA, \label{eqn_tf_lb2}
\end{align}
having integrated by parts and used $\nabla^\Sigma \theta = \frac{\nabla^\Sigma \beta}{1-\beta^2}$.

\vspace{2mm}
\paragraph{\emph{Conclusion}} Taking formula (\ref{eqn_mms}) and substituting in (\ref{eqn_G}), (\ref{first_three_finish}), (\ref{eqn_next_three_finish}), and (\ref{eqn_tf_lb2}), and taking cancellations, we have:

\begin{align*}
\left.\frac{d}{d\lambda} m_H(\Sigma_\lambda)\right|_{\lambda=0} \!\!&=
\sqrt{\frac{|\Sigma|}{(16\pi)^3}}  \Bigg\{4\pi(2-\chi(\Sigma))+\int_\Sigma\! \Bigg[2G(-\vec H^\perp, \vec \xi^\perp)+    |\mathring \II_{\nu_H} |^2 + 2\beta \langle  \mathring \II_{\nu_H}, \mathring \II_{\nu_H^\perp}\rangle + |\mathring \II_{\nu_H^\perp}|^2 
 \nonumber\\
&\qquad +2\left(\left| \frac{\nabla^\Sigma |\vec H|}{|\vec H|}\right|^2 + 2\beta\alpha_H\left(\frac{\nabla^\Sigma |\vec H|}{|\vec H|}\right) +|\alpha_H|^2 \right)   + 2\beta \div_\Sigma (\alpha_H) \Bigg]dA\Bigg\}.
\end{align*}
This completes the proof of Theorem \ref{thm_tf_monotonicity}.
\end{proof}

Now Corollary \ref{cor_tf} from the introduction can be proved by gathering the following facts: $\chi(\Sigma) \leq 2$ for $\Sigma$ connected; $G(-\vec H^\perp, \vec \xi^\perp) \geq 0$ by the dominant energy condition (since $-\vec H^\perp$ and $\vec \xi^\perp$ are future-timelike); $\div_\Sigma(\alpha_H)=0$ by time flatness; the remaining two groups of three terms are pointwise nonnegative since $|\beta| < 1$.

\begin{example}
\label{ex_sphere}
Suppose $\Sigma$ is a unit sphere contained in the $t=0$ slice of the Minkowski spacetime (which is a time flat surface).  In the above formula, $\chi(\Sigma)=2$, $G=0$ since this spacetime is vacuum, $\mathring \II_{\nu_H}=0$ because $\Sigma$ is totally umbilic in the $t=0$ slice, $\mathring \II_{\nu_H^\perp}=0$ because the $t=0$ slice is totally geodesic, $|\vec H|=2$ is constant, and $\alpha_H=0$.  Thus, for any uniformly area expanding flow $\Sigma_\lambda$ out of $\Sigma$, the Hawking mass does not change to first order.  However, we remark that there exist such flows for which $\left.\frac{d^2 m_H(\Sigma_\lambda)}{d\lambda^2}\right|_{\lambda=0}$ is strictly positive.
\end{example}

The following conjecture would justify the terminology of ``time flat'': such surfaces in the Minkowski spacetime ought not to have timelike oscillations.
\begin{conjecture}
\label{conj_time_flat}
Let $\Sigma$ be a closed surface in $\R^{3,1}$ that is contained in a complete, spacelike hypersurface.   If $\Sigma$ is time flat, then $\Sigma$ is contained in a spacelike hyperplane. 
\end{conjecture}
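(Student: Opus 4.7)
The plan is to prove that the \emph{future-timelike Gauss map} $G:\Sigma \to \H^3$, defined by $G(p)=\nu_H^\perp(p)$ with $\H^3$ viewed as the upper unit hyperboloid in $\R^{3,1}$, is a constant map. Once $G\equiv T_0$ for a parallel unit timelike vector $T_0$, the orthogonality $T_0\perp T_p\Sigma$ at every $p$ implies that the linear function $\langle\,\cdot\,,T_0\rangle$ restricted to the connected surface $\Sigma$ has zero differential, so $\Sigma$ is contained in the spacelike hyperplane $\{\langle X,T_0\rangle=c\}$.

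Because parallel transport in $\R^{3,1}$ is the ordinary coordinate derivative, for every $X\in T_p\Sigma$,
$$dG(X)=\nabla_X^N \nu_H^\perp = S_{\nu_H^\perp}(X) - \alpha_H(X)\,\nu_H,$$
where $S_{\nu_H^\perp}:T\Sigma\to T\Sigma$ is defined by $\langle S_{\nu_H^\perp}X,Y\rangle=\II_{\nu_H^\perp}(X,Y)$; hence $dG\equiv 0$ is equivalent to the twin vanishing $\II_{\nu_H^\perp}\equiv 0$ and $\alpha_H\equiv 0$. Three structural inputs are immediate: (i) $\tr(\II_{\nu_H^\perp})=-\langle\vec H,\nu_H^\perp\rangle=0$, since $\vec H$ is a scalar multiple of $\nu_H$ by the definition of $\nu_H$; (ii) the Codazzi equation in the flat ambient gives
$$\divm_\Sigma \II_{\nu_H^\perp} \;=\; \bigl((|\vec H|\,\mathrm{Id} - S_{\nu_H})\alpha_H^\sharp\bigr)^\flat;$$
(iii) the Ricci equation in the flat ambient gives $d\alpha_H(X,Y)=\langle [S_{\nu_H},S_{\nu_H^\perp}]X,Y\rangle_\Sigma$.

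The core step is to pair the Codazzi identity with $\alpha_H^\sharp$ in $L^2(\Sigma)$ and integrate by parts on the left. Time-flatness $\divm_\Sigma\alpha_H=0$ forces $(\nabla\alpha_H)^{\mathrm{sym}}$ to be trace-free, matching the trace-freeness of $\II_{\nu_H^\perp}$, and yields
$$-\int_\Sigma \bigl\langle \II_{\nu_H^\perp}, (\nabla\alpha_H)^{\mathrm{sym}}\bigr\rangle \, dA \;=\; \int_\Sigma \bigl(|\vec H|\,|\alpha_H|^2 - \langle S_{\nu_H}\alpha_H^\sharp,\alpha_H^\sharp\rangle\bigr)\, dA.$$
In a principal frame for $S_{\nu_H}$ with eigenvalues $\kappa_1,\kappa_2$ satisfying $\kappa_1+\kappa_2=|\vec H|>0$, the right-hand integrand equals $\kappa_2|\alpha_H^1|^2+\kappa_1|\alpha_H^2|^2$, pointwise nonnegative whenever $S_{\nu_H}$ is positive semi-definite. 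The plan is to bound the left-hand side from above by this same quantity (using Cauchy--Schwarz in tandem with the Codazzi identity itself, which reexpresses $\II_{\nu_H^\perp}$ in terms of $\alpha_H$ up to a zeroth-order operator), thereby forcing pointwise saturation; this yields $\alpha_H\equiv 0$, and then Codazzi immediately delivers $\II_{\nu_H^\perp}\equiv 0$.

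The main obstacle is the interaction with the topology of $\Sigma$ and the potential degeneracy of $S_{\nu_H}$. When $\Sigma\cong S^2$, harmonic $1$-forms vanish and I expect the argument to close cleanly, since the Bochner-type identity provides all the positivity one needs. For higher genus, the $2g$-dimensional space of harmonic $1$-forms obstructs a simple Hodge-theoretic conclusion, and the desired pointwise positivity must be extracted entirely from the coupling between $\II_{\nu_H^\perp}$ and $\alpha_H$. Moreover, points where $S_{\nu_H}$ loses positive-definiteness — permitted in principle for a closed spacelike surface in $\R^{3,1}$ — demand special care, perhaps by invoking the global embedding of $\Sigma$ in a complete spacelike hypersurface to produce a topological rigidity argument, or else by strengthening the hypotheses to a strict convexity in the $\nu_H$-direction. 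It is plausible that a full proof requires either a genus-zero restriction or an auxiliary convexity assumption; the $S^2$ case should be the natural first target and is where the outline above is most likely to succeed without further input.
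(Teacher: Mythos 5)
The statement you are attempting is labeled a \emph{Conjecture} in the paper and is not proved there. The authors present it as an open problem, cite Chen--Wang--Wang \cite{cww} for two special cases (vanishing $\alpha_H$; axially symmetric, positive Gauss curvature, graphical over a convex surface) and for an infinitesimal rigidity result, and record that they themselves prove only the $(2+1)$-dimensional analogue (time flat curves in $\R^{2,1}$) in a forthcoming paper. There is therefore no proof in this paper against which your argument can be checked; it stands or falls on its own.

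On its own terms, the outline does not close, and you say as much. The concrete gap is in your ``core step.'' Pairing the Codazzi relation with $\alpha_H^\sharp$ and integrating by parts (using $\divm_\Sigma\alpha_H=0$) gives
\begin{equation*}
-\int_\Sigma \bigl\langle \II_{\nu_H^\perp}, (\nabla\alpha_H)^{\mathrm{sym}}\bigr\rangle\, dA
\;=\; \int_\Sigma \bigl(|\vec H|\,|\alpha_H|^2 - \langle S_{\nu_H}\alpha_H^\sharp,\alpha_H^\sharp\rangle\bigr)\, dA,
\end{equation*}
and even granting positive semi-definiteness of $S_{\nu_H}$ so that the right side is $\geq 0$, nothing forces the left side to be $\leq 0$. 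The left side couples a second unknown, $\II_{\nu_H^\perp}$, to $\nabla\alpha_H$; Codazzi fixes only $\divm_\Sigma\II_{\nu_H^\perp}$ in terms of $\alpha_H$, which is far from determining $\II_{\nu_H^\perp}$, and the Ricci relation you record is not used. ``Cauchy--Schwarz in tandem with Codazzi'' does not obviously produce the needed upper bound, even in the $S^2$ case, and you do not carry it out. Separately, you never invoke the hypothesis that $\Sigma$ lies in a complete spacelike hypersurface. The paper's one-dimensional discussion shows that this hypothesis cannot be dropped: nonplanar simple closed curves of constant torsion in $\R^{2,1}$ exist, so the naive analogue of the conclusion fails without the spacelike-hypersurface constraint. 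Any viable proof strategy must engage that global hypothesis in an essential way; a purely local Bochner-type identity on $\Sigma$ cannot, by itself, see it.
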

In particular, $\Sigma$ must have nonpositive Hawking mass, since this is a well-known fact for surfaces in $\R^3$.

\begin{remark}

As first evidence for its validity, Po-Ning Chen, Mu-Tao Wang, and Ye-Kai Wang have proved 
Conjecture \ref{conj_time_flat} for two separate special cases: a) if $\Sigma$ has vanishing $\alpha_H$, or b) $\Sigma$ is axially symmetric, with positive Gauss curvature, and can be written as the graph over a convex surface.  They also showed an infinitesimal rigidity statement: that a time flat surface contained in a spacelike hyperplane of $\R^{3,1}$ cannot be perturbed, on the infinitesimal level, to a time flat surface in a nontrivial way \cite{cww}.
\end{remark}

We conclude this section by considering a lower-dimensional analogy.
Consider a simple closed, oriented curve $\gamma$ in a (2+1)-dimensional Lorentzian manifold.  Assume that $\gamma$ is spacelike with spacelike, non-vanishing mean curvature vector.  In particular, the Frenet frame $\{T,N,B\}$ of $\gamma$ is everywhere defined.  The connection 1-form on the normal bundle of $\gamma$ with respect the frame $\{N,B\}$ is  $\tau ds$, where $\tau$ is the torsion and $ds$ is the volume form on $\gamma$.  In particular, if Definition \ref{def_TF} is extended to dimension 1, time flatness would be equivalent to  $\tau ds$ being divergence-free, i.e. the torsion being constant.  This provides motivation for considering closed curves of constant torsion in Minkowski 3-space $\R^{2,1}$.  Closed curves of constant nonzero torsion and non-vanishing curvature in Euclidean space $\R^3$ are known to exist \cite{weiner}.  Such curves in $\R^{2,1}$ that are spacelike with spacelike mean curvature also appear to exist, though this has not appeared in the literature.  Based on this analogy, we anticipate that nontrivial, closed time flat surfaces in $\R^{3,1}$ exist, although Conjecture \ref{conj_time_flat} would constrain them.

In a forthcoming paper, the authors prove the (2+1)-dimensional version of Conjecture \ref{conj_time_flat} \cite{const_torsion} (see also \cite{nonnegative_torsion}).  Define a spacelike curve in $\R^{2,1}$ to be time flat if it has spacelike mean curvature vector and constant torsion.

\begin{theorem}
Let $\gamma$ be a simple, closed curve in $\R^{2,1}$ that is contained in a complete, spacelike hypersurface.  If $\gamma$ is time flat, then $\gamma$ has zero torsion and is hence contained in a spacelike hyperplane.
\end{theorem}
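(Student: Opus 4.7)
The plan is to compare the Frenet frame of $\gamma$ in $\R^{2,1}$ with the Darboux frame it inherits as a curve in the containing complete spacelike hypersurface $M$, extract a scalar identity expressing the constant torsion $\tau$ as an integral over $\gamma$, and then force that integral to vanish using a global Codazzi/Stokes argument that genuinely uses the completeness of $M$.  Since $\gamma$ is spacelike with spacelike mean curvature vector, the Frenet frame $\{T,N,B\}$ is globally defined along $\gamma$, with $T$ and $N=\vec H/|\vec H|$ spacelike and $B$ necessarily timelike (the normal plane to the spacelike $T$ in $\R^{2,1}$ carries signature $(-,+)$).  In this causal configuration the Frenet equations read $T'=\kappa N$, $N'=-\kappa T+\tau B$, $B'=\tau N$, with $\kappa=|\vec H|>0$, and time flatness is $\tau\equiv\mathrm{const}$.

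Let $n$ be the future unit timelike normal of $M$ in $\R^{2,1}$ and $\nu$ the unit spacelike normal of $\gamma$ in $M$.  The pairs $\{N,B\}$ and $\{\nu,n\}$ are two orthonormal frames of the signature-$(-,+)$ normal plane $T^{\perp}\gamma$ and therefore differ by a hyperbolic rotation with a smooth, single-valued angle $\phi\colon\gamma\to\R$,
\[
N=\cosh\phi\,\nu+\sinh\phi\,n,\qquad B=\sinh\phi\,\nu+\cosh\phi\,n.
\]
Differentiating $B$ along $\gamma$, using the Frenet equation $B'=\tau N$ and the Gauss--Weingarten formulas for $M\hookrightarrow\R^{2,1}$
\[
\nabla^{\R^{2,1}}_T\nu=-\kappa_g T+k(T,\nu)\,n,\qquad \nabla^{\R^{2,1}}_T n=k(T,T)T+k(T,\nu)\,\nu
\]
(where $k$ is the second fundamental form of $M$ and $\kappa_g$ is the geodesic curvature of $\gamma$ in $M$), and collecting the $N$-component, I obtain the pointwise identity $\phi'=\tau-k(T,\nu)$.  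Since $\gamma$ is closed and $\phi$ is single-valued, integrating gives the central formula
\[
\tau\,L(\gamma)=\oint_\gamma k(T,\nu)\,ds,
\]
which expresses the constant torsion as the average off-diagonal shear of $M$ along $\gamma$.

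The remaining step --- and the main obstacle --- is to show $\oint_\gamma k(T,\nu)\,ds=0$, which forces $\tau=0$; the second conclusion then follows at once because $B'=\tau N=0$ makes $B$ a fixed timelike vector and places $\gamma$ in the spacelike hyperplane through any of its points orthogonal to $B$.  I would establish the vanishing using two inputs.  First, since $\R^{2,1}$ is flat, Codazzi on $M$ reduces to the statement that for every parallel vector $E\in\R^{2,1}$ with tangential projection $E^{\top}$ on $M$, the 1-form $X\mapsto k(E^{\top},X)$ equals $-d\langle E,n\rangle$ and is therefore exact on $M$; summed over a basis of $\R^{2,1}$ this is the vector identity $\oint_\gamma kT\,ds=0$ in $\R^{2,1}$.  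Second, by completeness $M$ is realized as an entire spacelike graph $t=f(x,y)$ with $|\nabla f|<1$, so $\gamma$ bounds a compact disk $\Omega\subset M$ and $\nu$ admits a global decomposition in terms of the tangential projections of the coordinate parallel fields $\partial_x$ and $\partial_y$.  A Stokes/integration-by-parts argument on $\Omega$ exploiting the exactness of the 1-forms $k(E^{\top},\cdot)$ then reduces the scalar shear integral $\oint_\gamma k(T,\nu)\,ds$ to boundary terms that telescope to zero.  The delicate point is precisely this last reduction: extracting the scalar vanishing from the vector identity $\oint kT\,ds=0$ is not automatic and genuinely uses the graph structure and the uniform spacelike bound afforded by completeness.
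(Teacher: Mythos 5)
The paper does not actually contain a proof of this theorem; it is stated as a result of the authors' forthcoming paper \cite{const_torsion}, so there is no in-paper argument to compare against. I will therefore evaluate the proposal on its own terms.

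The first portion of your argument is sound and well executed. The Frenet equations in the signature you state are correct for a spacelike curve with spacelike $\vec H$ and timelike binormal $B$. The hyperbolic change-of-frame $N=\cosh\phi\,\nu+\sinh\phi\,n$, $B=\sinh\phi\,\nu+\cosh\phi\,n$ is well defined with a globally single-valued $\phi$ (because the structure group of the timelike normal plane is $SO^+(1,1)\cong\R$, unlike the Euclidean $SO(2)$), and the pointwise identity $\phi'=\tau-k(T,\nu)$ together with $\oint\phi'\,ds=0$ yields $\tau L(\gamma)=\oint_\gamma k(T,\nu)\,ds$ when $\tau$ is constant. This is the right reduction and is essentially the one-dimensional specialization of the connection-form machinery of sections 3--4, with $\alpha_\nu$ identified with $-k(T,\nu)\,ds$ and $\alpha_{\nu_H}$ with $\pm\tau\,ds$.

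The gap is in the final step, and it is fatal as stated. You propose to show $\oint_\gamma k(T,\nu)\,ds=0$ by a Codazzi/Stokes argument on the disk $\Omega\subset M$ bounded by $\gamma$, independently of the constant-torsion hypothesis, and you acknowledge the step is ``delicate'' and ``not automatic.'' In fact the intended universal vanishing is false. Your own identity shows $\oint_\gamma k(T,\nu)\,ds=\oint_\gamma\tau\,ds$ for \emph{every} closed spacelike curve with spacelike $\vec H$ lying in $M$, not just for time flat ones; thus a universal Stokes argument would prove that the total torsion of any such curve vanishes. That is false: taking $\gamma$ to be a small graphical perturbation of a round circle, such as $\gamma(\theta)=(\epsilon h(\theta),\cos\theta,\sin\theta)$ in $\R^{2,1}$ with $h(\theta)=\sin\theta+\tfrac12\sin 2\theta$, one computes $\oint_\gamma\tau\,ds=O(\epsilon^3)\ne 0$, while $\gamma$ bounds a disk in a complete spacelike graph and has spacelike $\vec H$. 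Correspondingly, the naive Stokes computation with $\sigma=k(V,J\,\cdot)$ (for an extension $V$ of $T$) is a tautology: $d\sigma=-\div_M\bigl(k(V,\cdot)\bigr)\,dA$, so $\int_\Omega d\sigma=-\oint_\gamma k(T,\nu)\,ds=\oint_\gamma\sigma$ contains no information. Moreover, your Codazzi input $\oint_\gamma k(E^\top,T)\,ds=0$ for parallel $E$ only gives the \emph{vector} identity $\oint_\gamma (kT)^\sharp\,ds=0$ in $\R^{2,1}$; pairing that with the \emph{varying} conormal $\nu$ to extract the scalar $\oint k(T,\nu)\,ds$ is exactly what cannot be done by a frame-independent integration by parts. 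In short, any correct proof must make essential use of the pointwise constancy of $\tau$, not merely of $\oint\tau\,ds=\oint k(T,\nu)\,ds$, and your outline does not do so; the paper's own remark that ``nonplanar, simple, closed, time flat curves in $\R^{2,1}$ do exist'' should be read as a warning that such universal arguments must fail. (Secondary and much less serious: the entire-graph claim for complete spacelike hypersurfaces in $\R^{2,1}$ should be attributed and justified, but that is not the obstruction.)
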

The proof is nontrivial because nonplanar, simple, closed, time flat  curves in $\R^{2,1}$ do exist.  It is therefore essential to use the hypothesis that $\gamma$ is contained in a spacelike hypersurface.

\section{Discussion: flows of time flat surfaces}
\label{sec_discussion}
In this section, we discuss how time flat surfaces might be useful in addressing a range of questions surrounding the concept of mass in general relativity.
Consider a closed, spacelike surface $\Sigma$ in an asymptotically flat spacetime $N$.  For problems pertaining to quasi-local mass and the Penrose inequality, a flow beginning at $\Sigma$ and possessing the following properties would be desirable:
\begin{itemize}
\item The flow has a (weak) existence theory, which possibly allows for discrete jumps in the surfaces.
\item The flowed surfaces sweep out a spacelike hypersurface (modulo jumps) with good asymptotics at infinity.
\item A nontrivial quantity (such as the Hawking mass) is nondecreasing along the flow (including at the jumps), and this quantity limits to the ADM mass or ADM energy at infinity.
\end{itemize}

One immediate issue with this picture is the following: there exist spacelike perturbations of a round sphere in a $t=$ constant slice of $\R^{3,1}$ that have spacelike mean curvature
vector and \emph{positive} Hawking mass.  Yet, by any reasonable definition, the mass and energy of $\R^{3,1}$ vanish.  However, such surfaces must have timelike oscillations,
which ought to prevent them from being time flat (Conjecture \ref{conj_time_flat}).  This provides yet another reason for considering the time flat condition and motivates the following flow:

\begin{definition}
A smooth family of surfaces $\{\Sigma_\lambda\}$  in $N$ is said to satisfy \emph{uniformly area expanding time flat flow} if
each $\Sigma_\lambda$ is time flat, and the flow velocity is uniformly area expanding (cf. Definition \ref{def_uae}).
\end{definition}
As an immediate consequence of Corollary \ref{cor_tf} from the introduction, we have:
\begin{corollary}
Suppose $\{\Sigma_\lambda\}$ are  closed and connected and satisfy uniformly area expanding time flat flow with outward-spacelike velocity, and that $N$ obeys the dominant energy condition. Then 
$$\frac{d}{d\lambda} m_H(\Sigma_\lambda) \geq 0.$$
\end{corollary}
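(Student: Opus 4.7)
The plan is to reduce the statement immediately to Corollary \ref{cor_tf}, applied at each parameter value $\lambda_0$ rather than only at the initial time. For any fixed $\lambda_0$ in the domain, I would consider the time-translated reparametrization $\tilde \Phi_{\tilde \lambda} := \Phi_{\lambda_0 + \tilde \lambda}$, so that $\tilde \Sigma_0 = \Sigma_{\lambda_0}$. By the definition of uniformly area expanding time flat flow, the surface $\Sigma_{\lambda_0}$ is closed, connected, and time flat, and the initial velocity of the reparametrized flow is precisely the velocity of the original flow at $\lambda_0$, which is outward-spacelike and uniformly area expanding. Together with the dominant energy condition on $N$, these are exactly the hypotheses of Corollary \ref{cor_tf}.

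Applying that corollary yields $\left.\frac{d}{d\tilde \lambda} m_H(\tilde \Sigma_{\tilde \lambda})\right|_{\tilde \lambda = 0} \geq 0$, which by the chain rule is $\left.\frac{d}{d\lambda} m_H(\Sigma_\lambda)\right|_{\lambda = \lambda_0} \geq 0$. Since $\lambda_0$ was arbitrary, this establishes the corollary. For clarity one could unpack, via Theorem \ref{thm_tf_monotonicity}, the fact that each of the four groups of terms in the variation formula has a definite sign at $\lambda_0$: the topological term $4\pi(2-\chi(\Sigma_{\lambda_0}))$ is nonnegative by connectedness; the Einstein-curvature term $2\int_{\Sigma_{\lambda_0}} G(-\vec H^\perp,\vec \xi^\perp) dA$ is nonnegative because DEC guarantees $G(u,v) \geq 0$ for future-timelike $u,v$ and both $-\vec H^\perp$ and $\vec \xi^\perp$ are future-timelike; the two quadratic-form groups involving $\mathring \II_{\nu_H}, \mathring \II_{\nu_H^\perp}$ and the logarithmic derivative of $|\vec H|$ paired with $\alpha_H$ are pointwise nonnegative because they may be completed as squares and $|\beta| < 1$ (since $\vec \xi$ and $\vec H$ are both spacelike); and the remaining divergence term $\int_{\Sigma_{\lambda_0}} \beta\, \div_\Sigma(\alpha_H)\, dA$ vanishes identically by time flatness of $\Sigma_{\lambda_0}$.

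The only point requiring care is interpretive: one must read the definition of ``uniformly area expanding time flat flow'' as imposing time flatness and the uniformly-area-expanding condition at \emph{every} $\lambda$, rather than merely at $\lambda=0$ as in Definition \ref{def_uae}. Once this convention is granted, there is no substantive obstacle, since Theorem \ref{thm_tf_monotonicity} has already done the real work of decomposing the variation of the Hawking mass into sign-definite pieces under precisely these hypotheses.
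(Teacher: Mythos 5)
Your proposal is correct and takes essentially the same approach as the paper, which simply observes the corollary to be an immediate consequence of Corollary \ref{cor_tf} applied at each $\lambda_0$. The interpretive point you flag about reading the hypotheses as holding for every $\lambda$ is indeed implicit in the paper's definition of a uniformly area expanding time flat flow, and your unpacking of the sign-definite terms mirrors the paper's justification of Corollary \ref{cor_tf}.
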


We give examples of such flows: first, any smooth inverse mean curvature flow of mean-convex surfaces in a $t= $ constant slice of a static spacetime (\ref{eqn_static}) satisfies these conditions.  Alternatively, one may ``steer'' the flow by allowing the surfaces to smoothly move between various $t= $ constant slices, requiring $t$ to restrict to a constant on each surface.  Generally, such a flow will have $\beta$ (defined in \ref{eqn_gen_imcvf}) variable on each surface.

Second,  spherically symmetric spheres in a spherically symmetric spacetime (which are time flat), may be evolved 
with uniformly area expanding velocity and $t$ restricting to a constant on each sphere.  In this way, any spherically symmetric hypersurface of the spacetime can be foliated by such a flow (provided the spheres have inward-spacelike mean curvature vector). 
These examples suggest the necessity of some gauge condition
to determine a unique flow.  One possible condition would be to require
\begin{equation*}
\int_\Sigma \langle T, \nu \rangle dA = 0,
\end{equation*}
on each surface, where $T$ is some fixed future-timelike unit vector field on $N$.

The two most important issues surrounding this flow are existence and asymptotics at infinity.  
In terms of short-time existence, the analytic difficulties appear to be two-fold.  First, the time flat condition involves fourth derivatives of the embedding function, whereas most well-known flows are second-order.  Second, preserving the time flat condition is non-local in character.  

For long-time existence, it is expected that outward ``jumps'' of the surface within the spacetime must occur.  Indeed, the jumping phenomenon is present in the work of Huisken and Ilmanen for inverse mean curvature flow in the time-symmetric case.  A possible condition is that a surface instantaneously jumps to the outermost time flat surface of equal of less area that encloses it\footnote{We say $\Sigma_2$ encloses $\Sigma_1$ if there exists a smooth spacelike hypersurface $\Omega$ with boundary components $\Sigma_1$ and $\Sigma_2$, such that the outward and inward directions of $\Sigma_1$ and $\Sigma_2$, respectively, point into $\Omega$.}. 
Without the time flat condition, this jumping criterion is meaningless:  any spacelike surface in $\R^{3,1}$ is enclosed by surfaces of arbitrarily small area.  For jumps to make sense, it would be necessary to include surfaces that are time flat in a weak sense (in analogy with Huisken--Ilmanen's jump surfaces that have regions of vanishing mean curvature). An open question is to determine whether the Hawking mass can decrease at such a jump. 
\begin{definition}
An embedded surface $\Sigma$ in a spacetime is $C^{k,\alpha}$ \emph{weakly time flat} if it is the $C^{k,\alpha}$-limit of a sequence of time flat surfaces.
\end{definition}
The choice of $k$ and $\alpha$ may depend on the application.  
Such a surface could have null mean curvature vector or a null tangent plane.  

Finally, we address possible behavior of the asymptotics of a spacelike uniformly area expanding time flat flow.  
We propose that the time flat  condition keeps time-like oscillations in check, while the uniformly area expanding condition smooths/regularizes the surfaces in spacelike directions (which occurs in codimension-1 inverse mean curvature flow).  If these heuristics could be justified, the surfaces would conceivably have good asymptotics at infinity.  For instance, if the spacetime is asymptotically flat, then the surfaces may sweep out an asymptotically flat hypersurface with appropriately decaying second fundamental form (and moreover detect the ADM mass or ADM energy).

This discussion leads to the following question:
\begin{question}
For what spacetimes is it possible to construct a uniformly area expanding time flat flow $\{\Sigma_\lambda\}$, beginning at a specified time flat surface $\Sigma$, with $m_H(\Sigma_\lambda)$ limiting to the ADM mass or ADM energy as $\lambda \to \infty$?
\end{question}
For instance, the answer may include perturbations of the Minkowski and Schwarz\-schild spacetimes.

We close with the following comment: the Hawking mass and Brown--York mass  \cite{brown_york} are two highly celebrated
ways of understanding quasi-local mass in general relativity --- yet they are quite different from each other. The Brown--York mass was generalized to spacetimes by Wang and Yau \cites{wang_yau1, wang_yau2} in recent years.  Remarkably, the time flat condition is related to both the spacetime Hawking mass and the Wang--Yau mass!

\section*{Appendix: Geometric computations}
This appendix contains proofs of a number of identities  used in sections \ref{sec_monotonicity} and \ref{sec_time_flat} to establish the variation formula for the Hawking mass.

Equations (\ref{eqn_var_dA}) and (\ref{eqn_var_A}) follow from the first variation of area formula and the fact that $\{\Sigma_\lambda\}$ solve inverse mean curvature flow in $M$ (see \cite{imcf} for instance);  (\ref{eqn_var_H}) is standard and follows from the second variation of area formula,
combined with the Gauss equation traced twice.  

\begin{proof}[Proof of identity (\ref{eqn_var_p}):]
Consider a flow of surfaces $\Sigma_\lambda = \Phi_\lambda(\Sigma)$ in $M$ with unit normals $\nu_\lambda$ and normal velocity $\vec \xi = \eta \nu_0$ at $\lambda=0$, where $\eta$ is some function on $\Sigma=\Sigma_0$.  Recall that
\begin{equation}
\label{eqn_nu_lambda}
\dot \nu_0 :=\left.\frac{\partial \nu_\lambda }{\partial \lambda}  \right|_{\lambda=0} = -\nabla^\Sigma \eta.
\end{equation}
To prove this, first observe that differentiating $\langle \nu_\lambda, \nu_\lambda\rangle=1$ implies $\dot \nu_0$ is orthogonal to $\nu_0$.  Second, define the family of embeddings $F_\lambda : \Sigma_0 \to N$:
$$F_\lambda(x) = \exp_x(\lambda \eta \nu),$$
where $\exp$ is the exponential map of $M$. 
For the purposes of first derivative calculations at $\lambda=0$, we may use $F_\lambda$ in place of $\Phi_\lambda$.  For any fixed vector $X \in T_x \Sigma_0$, we let $X_\lambda = d(F_\lambda)_x(X)$. A standard calculation shows
$$\langle\dot X_0,\nu_0\rangle :=\left\langle\left.\frac{\partial X_\lambda}{\partial \lambda}\right|_{\lambda = 0}, \nu_0\right\rangle = \langle \nabla^\Sigma \eta, X \rangle.$$
It follows that
$$0 = \left.\frac{\partial }{\partial \lambda}\right|_{\lambda = 0}  \langle X_\lambda, \nu_\lambda\rangle = \langle \dot X_0, \nu_0\rangle + \langle X, \dot \nu_0\rangle.$$
Putting this all together, one obtains (\ref{eqn_nu_lambda}).

Therefore, for the present case of inverse mean curvature flow,
\begin{align*}
\left.\frac{\partial p(\nu_\lambda, \nu_\lambda)}{\partial \lambda}  \right|_{\lambda=0} 
&= (\nabla^M_{\vec \xi} p)(\nu,\nu) + 2p\left(\nu, -\nabla^\Sigma \eta\right)\\
&= (\nabla^M_{\nu/H} p)(\nu,\nu) - 2p\left(\nu, \nabla^\Sigma \left(\frac{1}{H}\right)\right)\\
&= \frac{1}{H}(\nabla^M_{\nu} p)(\nu,\nu) + \frac{2}{H^2}p\left(\nu, \nabla^\Sigma H\right),
\end{align*}
from which  (\ref{eqn_var_p}) follows.
\end{proof}

\begin{proof}[Proof of identity \ref{identity_div_M}:]
Let $\{e_1, e_2\}$ be
a local orthonormal frame on $\Sigma$.  We have
\begin{align*}
(\div_M (p))(\nu) &= (\nabla^M_\nu p)(\nu,\nu) + \sum_{i=1}^2 (\nabla^M_{e_i} p)(e_i,\nu)\\
&=(\nabla^M_\nu p)(\nu,\nu) +\sum_{i=1}^2 e_i (p(e_i,\nu)) - p(\nabla^M_{e_i} e_i,\nu) - p(e_i, \nabla_{e_i}^M \nu).
\end{align*}

On the other hand, we  compute the divergence of the 1-form $\ol p$:
\begin{align*}
 \div_\Sigma (\ol p) &= \sum_{i=1}^2 (\nabla^\Sigma_{e_i} p) (e_i)\\
& = \sum_{i=1}^2 e_i (\ol p(e_i)) - \ol p (\nabla^\Sigma_{e_i} e_i)\\
& = \sum_{i=1}^2 e_i ( p(e_i,\nu)) - p(\nabla^\Sigma_{e_i} e_i,\nu).
\end{align*}
Putting these together:
\begin{align*}
(\div_M (p))(\nu) &=(\nabla^M_\nu p)(\nu,\nu) +\div_\Sigma (\ol p) - \sum_{i=1}^2 p((\nabla^M_{e_i} e_i)^{\text{nor}},\nu) + p(e_i, \nabla_{e_i}^M \nu),
\end{align*}
where $(\cdot)^{\text{nor}}$ denotes orthogonal projection onto the normal bundle of $\Sigma$ in $M$.  Recalling the conventions (\ref{eqn_k})--(\ref{eqn_H}) for the second fundamental form and mean curvature, we have
$$(\nabla^M_{e_i} e_i)^{\text{nor}} = -H \nu,$$
 and
\begin{align*}
\sum_{i=1}^2 p(e_i, \nabla_{e_i}^M \nu) &= \sum_{i,j=1}^2 p(e_i, \langle\nabla_{e_i}^M \nu,e_j \rangle e_j)\\
&=\sum_{i,j=1}^2 p(e_i,e_j) A(e_i,e_j)\\
&=\langle p_\Sigma, A\rangle_\Sigma.
\end{align*}
Substituting yields (\ref{identity_div_M}).
\end{proof}

\begin{proof}[Proof of identity (\ref{identity_R}):]
From the definition of $p$ (equation (\ref{eqn_p})), we have
\begin{align*}
\tr_M\! k &=\frac{1}{2} \tr_M\! p\\
k &= \frac{1}{2} (\tr_M \!p)g - p\\
|k|^2 &= |p|^2 - \frac{1}{4}(\tr_M\! p)^2.
\end{align*}
These expressions, substituted into the constraint equation (\ref{constraint1}) gives:
\begin{align*}
R&= 16\pi \mu  +|p|^2- \frac{1}{2}(\tr_M \!p)^2\\
 &= 16\pi \mu  + \left(|p_\Sigma|_\Sigma^2 + 2|\ol p|^2 + p(\nu,\nu)^2\right) - \frac{1}{2} \left( \tr_\Sigma p_\Sigma + p(\nu,\nu)\right)^2\\
 &= 16\pi \mu  +|p_\Sigma|_\Sigma^2 - \frac{1}{2} ( \tr_\Sigma p_\Sigma)^2+ 2|\ol p|^2 +\frac{1}{2} p(\nu,\nu)^2 - p(\nu,\nu)\tr_\Sigma p_\Sigma.
\end{align*}
\end{proof}

Next, (\ref{identity_J}) follows  from the definition of $p$ and the second constraint equation, (\ref{constraint2}).

\begin{proof}[Proof of \ref{identity_p_omega}:]
Observe $\nu^\perp$ is a unit normal to $M$ inside $N$, and recall the sign convention for $k$
from (\ref{eqn_k}).  Then for any vector $X$ tangent to $\Sigma$,
\begin{align*}
\ol p(X) &= p(X,\nu)\\ &= - k(X,\nu)\\
&= \langle \nabla^N_X \nu, \nu^\perp\rangle\\
&= \langle \nabla^\perp_X \nu, \nu^\perp\rangle\\
&=\alpha(X).
\end{align*}

\end{proof}

\begin{bibdiv}
 \begin{biblist}

\bib{bray_RPI}{article}{
   author={Bray, H.},
   title={Proof of the Riemannian Penrose inequality using the positive mass theorem},
   journal={J. Differential Geom.},
   volume={59},
   date={2001},
   number={2},
   pages={177--267}
}

\bib{imcf_spacetime}{article}{
   author={Bray, H.},
   author={Hayward, S.},
   author={Mars, M.},
   author={Simon, W.},
   title={Generalized inverse mean curvature flows in spacetime},
   journal={Comm. Math. Phys.},
   volume={272},
   date={2007},
   number={1},
   pages={119--138}
}

\bib{const_torsion}{article}{
   author={Bray, H.},
   author={Jauregui, J.},
   title={On constant torsion curves and time flat surfaces},
   note={Preprint}
   }

\bib{nonnegative_torsion}{article}{
   author={Bray, H.},
   author={Jauregui, J.},
   title={On curves with nonnegative torsion},
   note={Preprint}
   }

\bib{bray_khuri}{article}{
   author={Bray, H.},
   author={Khuri, M.},
   title={P.D.E.'s which imply the Penrose conjecture},
   journal={Asian J. Math.},
   volume={15},
   date={2011},
   number={4},
   pages={557--610}
   }

\bib{brown_york}{article}{
   author={Brown, J. D.},
   author={York, J. W., Jr.},
   title={Quasilocal energy and conserved charges derived from the
   gravitational action},
   journal={Phys. Rev. D (3)},
   volume={47},
   date={1993},
   number={4},
   pages={1407--1419}
}

\bib{cww}{article}{
	author={Chen, P.-N.},
	author={Wang, M.-T.},
	author={Wang, Y.-K.},
	title={Rigidity of time flat surfaces in the Minkowski spacetime},
	eprint={http://arxiv.org/abs/1310.6081}
}

\bib{frauendiener}{article}{
   author={Frauendiener, J.},
   title={On the Penrose inequality},
   journal={Phys. Rev. Lett.},
   volume={87},
   date={2001},
   number={10},
   pages={101101, 4}
 }

\bib{geroch}{article}{
	author={Geroch, R.},
	title={Energy extraction},
    journal={Ann. N.Y. Acad. Sci.},
	volume={	224},
	pages={108--117},
	date={1973}
}

\bib{hawking}{article}{
  author={Hawking, S.},
  title={Gravitational radiation in an expanding universe},
  journal={J. Math. Phys.},
  volume={9},
  date={1968}
}

\bib{hayward}{article}{
   author={Hayward, S.},
   title={Quasi-localization of Bondi-Sachs energy loss},
   journal={Class. Quantum Grav.},
   volume={11},
   date={1994},
   number={12},
   pages={3037--3048}
}

\bib{imcf}{article}{
   author={Huisken, G.},
   author={Ilmanen, T.},
   title={The inverse mean curvature flow and the Riemannian Penrose inequality},
   journal={J. Differential Geom.},
   volume={59},
   date={2001},
   number={3},
   pages={353--437}
}

\bib{jang1}{article}{
   author={Jang, P.S.},
   title={On the positive energy conjecture},
   journal={J. Mathematical Phys.},
   volume={17},
   date={1976},
   number={1},
   pages={141--145}
}

\bib{jang2}{article}{
   author={Jang, P.S.},
   title={On the positivity of energy in general relativity},
   journal={J. Math. Phys.},
   volume={19},
   date={1978},
   number={5},
   pages={1152--1155}
}

\bib{jang3}{article}{
   author={Jang, P.S.},
   title={Note on cosmic censorship},
   journal={Phys. Rev. D},
   volume={20},
   date={1979},
   pages={834--838}
}

\bib{jang_wald}{article}{
 author = {Jang, P.S.},
 author = {Wald, R.},
 title={The positive energy conjecture and the cosmic censor hypothesis},
 journal={J. Math. Phys.},
 volume={18},
  date={1977},
   pages={41--44}
   }

\bib{MMS}{article}{
   author={Malec, E.},
   author={Mars, M.},
   author={Simon, W.},
   title={On the Penrose inequality for general horizons},
   journal={Phys. Rev. Lett.},
   volume={88},
   date={2002},
   number={12},
   pages={121102-1, 4},
}

\bib{moore}{article}{
  author={Moore, K.},
  title={Evolving hypersurfaces by their inverse null mean curvature},
  eprint={http://arxiv.org/abs/1211.5325},
  date={2012}
}

\bib{wang_yau2}{article}{
   author={Wang, M.-T.},
   author={Yau, S.-T.},
   title={Isometric embeddings into the Minkowski space and new quasi-local
   mass},
   journal={Comm. Math. Phys.},
   volume={288},
   date={2009},
   number={3},
   pages={919--942}
}

\bib{wang_yau1}{article}{
   author={Wang, M.-T.},
   author={Yau, S.-T.},
   title={Quasilocal mass in general relativity},
   journal={Phys. Rev. Lett.},
   volume={102},
   date={2009},
   number={2},
   pages={no. 021101, 4}
}

\bib{weiner}{article}{
 author = {Weiner, J.},
 title={Closed curves of constant torsion. II},
 journal={Proc. Amer. Math. Soc.},
 volume={67},
 number={2},
 date={1977},
 pages={306--308}
}		

\end{biblist}
\end{bibdiv}

\end{document}